\documentclass[a4paper,12pt]{article}


\usepackage{amssymb, manfnt}
\usepackage{hyperref}
\usepackage[normalem]{ulem}
\usepackage{tikz}
\usepackage{booktabs}
\usepackage{multirow}
\usepackage{MnSymbol}
\usepackage{graphicx}
\usepackage{tikz-cd}
\usepackage{longtable}
\usepackage{amsmath,amscd,amsthm}

\usepackage{tkz-graph}
\tikzstyle{vertex}=[circle, draw, inner sep=0pt, minimum size=6pt]

\usepackage[all]{xy}
\graphicspath{ {graphs/} }
\usetikzlibrary{decorations.text}
\usepackage{graphicx}
\usepackage[outdir=./pictures]{epstopdf}
\graphicspath{ {pictures/} }
\usepackage[all]{xy}
\usepackage{adjustbox}
\usepackage{graphicx}
\usepackage{fullpage}
\usepackage[T1]{fontenc}
\usepackage{ifthen}
\usepackage[all]{xy}
\usepackage{pdflscape}
\usepackage{array}
\usepackage{multirow}
\usepackage{rotating}

\def\altdb{\vadjust{\vbox to 0pt{\vss\hbox{\kern \hsize
\quad{\dbend}}\kern\baselineskip\kern-10pt}}}

\setlength{\textheight}{23cm} \setlength{\topmargin}{0cm}
\setlength{\textwidth}{16cm} \setlength{\oddsidemargin}{0cm}
\setlength{\evensidemargin}{0cm}




\newcommand{\arxiv}[1]{\href{http://arxiv.org/abs/#1}{\tt arXiv:\nolinkurl{#1}}}

\newcommand{\euclid}[1]{\href{http://projecteuclid.org/getRecord?id=#1}{{\tt #1}}}
\newcommand{\mathscinet}[1]{\href{http://www.ams.org/mathscinet-getitem?mr=#1}{\tt #1}}
\newcommand{\googlebooks}[1]{(preview at \href{http://books.google.com/books?id=#1}{google books})}

\makeatletter
\let\@@pmod\pmod
\DeclareRobustCommand{\pmod}{\@ifstar\@pmods\@@pmod}
\def\@pmods#1{\mkern4mu({\operator@font mod}\mkern 6mu#1)}
\makeatother

\newcommand\id{\operatorname{id}}

\newcommand\ad{\operatorname{Ad}}

\newcommand\Z[1]{ \mathbb{Z}_{#1}}

\newcommand\cC{ \mathcal{C}}

\theoremstyle{plain}
\newtheorem{theorem}{Theorem}[section]
\newtheorem*{theorem*}{Theorem}
\newtheorem*{prop*}{Proposition}
\newtheorem{cor}[theorem]{Corollary}
\newtheorem{lemma}[theorem]{Lemma}

\newtheorem{rmk}[theorem]{Remark}
\theoremstyle{remark}

\theoremstyle{definition}
\newtheorem{example}[theorem]{Example}

\theoremstyle{definition}
\newtheorem{dfn}[theorem]{Definition}

\newcommand{\change}[1]{#1}
\newcommand{\changed}[1]{#1}
\newcommand{\changedd}[1]{#1}

\usepackage{color}


\numberwithin{equation}{section}

\DeclareRobustCommand*{\nicefrac}{\@UnitsNiceFrac}%
\makeatother

\newcounter{sublOne}
\newenvironment{sublOne}[1][]{\refstepcounter{sublOne}\par\medskip
   \noindent \textbf{Sublemma~\ref{lem:hard}.\thesublOne. #1} \rmfamily}{\medskip}

\usepackage{authblk}

\usepackage{longtable}
\title{A complete classification of \change{unitary} fusion categories tensor generated by an object of dimension $\frac{1 + \sqrt{5}}{2}$}

\author{Cain Edie-Michell}

\date{}

\allowdisplaybreaks
\begin{document}

\maketitle

\begin{abstract}
In this paper we give a complete classification of \change{unitary} fusion categories $\otimes$-generated by an object of dimension $\frac{1 + \sqrt{5}}{2}$. We show that all such categories arise as certain wreath products of either the Fibonacci category, or of the dual even part of the $2D2$ subfactor. As a by-product of proving our main classification result we produce a classification of finite \changedd{unitarizable} quotients of $\operatorname{Fib}^{*N}$ satisfying a certain symmetry condition.
\end{abstract}

\section{Introduction}\label{sec:intro}

Fusion categories are rich algebraic objects providing connections between various areas of mathematics such as representation theory \cite{MR2104671}, operator algebras \cite{1111.1362,MR1424954}, and quantum field theories in physics \cite{1312.7188,MR3146015}. Therefore, classification results regarding fusion categories have important implications for these various subjects. While a full classification result remains the distant goal, such a task is hopelessly out of reach with current available techniques. Instead research focuses on classifying ``small'' fusion categories, where small can have a variety of different meanings. For example one can attempt to classify fusion categories with a small number of simple objects \cite{MR3427429}.

Inspired by the successful classification of low index subfactors \cite{MR3166042,1509.00038} one can attempt a program to classify \change{unitary} fusion categories $\otimes$-generated by an object of small dimension. Given the success the low index subfactor classification in finding exotic examples \cite{0909.4099}, we hope that similar success can be obtained in finding exotic examples of fusion categories through this program. One of the earliest results in the field is the classification of \change{unitary} fusion categories generated by a self-dual object of dimension less than 2 \cite{MR1976459,MR1193933,MR1145672,MR1313457,MR1929335,MR1308617, MR1617550}, which contained the exceptional $E_6$ and $E_8$ examples. This result was generalised in \cite{1810.05717}, replacing the self-dual condition with a mild commutativity condition. Here again exceptional examples were found, namely the quantum subgroups $\mathcal{E}_4$ of $\mathfrak{sl}_4$ and $\mathcal{E}_{16,6}$ of $\mathfrak{sl}_2 \oplus\mathfrak{sl}_3$. Thus the program to classify \change{unitary} fusion categories $\otimes$-generated by an object of small dimension appears fruitful for discovering interesting new examples.

While the results \cite{MR1976459} and \cite{1810.05717} do provide a broad classification, it is somewhat unsatisfying having conditions on the $\otimes$-generating object of small dimension. However very little is known about the classification of categories $\otimes$-generated by an arbitrary object of small dimension. Outside of the trivial result for objects of dimension $1$, the easy proof of which has been known as folklore since the earliest days of the field (details can be found in \cite[Section 9]{1711.00645}), such classification results are non-existent in the literature. The purpose of this paper is to provide the first non-trivial such classification. The main result of this paper gives a complete classification of \change{unitary} fusion categories $\otimes$-generated by an object of dimension $\frac{1 + \sqrt{5}}{2}$.

\begin{theorem}\label{thm:main}
Let $\cC$ be a \change{unitary} fusion category $\otimes$-generated by an object of dimension $\frac{1 + \sqrt{5}}{2}$. Then $\cC$ is \changedd{unitarily} monoidally equivalent to either
\[ \operatorname{Fib}^{\boxtimes N} \overset{\omega}{\rtimes} \Z{NM} \text{ where } N,M\in \mathbb{N}\text{ and } \omega \in H^3(\Z{NM} , S^1) ,\]
where the action of $\Z{NM}$ factors through the action of $\Z{N}$ on $\operatorname{Fib}^{\boxtimes N}$ that cyclically permutes the $N$ factors,
or 
\[ \mathcal{TT}_{3}^{\boxtimes N} \overset{\omega}{\rtimes} \Z{2NM} \text{ where } N,M\in \mathbb{N}\text{ and } \omega \in H^3(\Z{2NM} , S^1),\]
where the $\Z{2NM}$ action factors through the action of $\Z{2N}$ on $\mathcal{TT}_{3}^{\boxtimes N}$ described in Remark~\ref{rmk:syms}.
\end{theorem}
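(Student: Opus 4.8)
The plan is to combine an arithmetic analysis of the fusion ring, driven by the structure of the ring $\ZZ[\varphi]$ where $\varphi=\frac{1+\sqrt5}{2}$, with the theory of graded extensions of fusion categories. Since every simple object of a fusion category has dimension at least $1$ and $\varphi<2$, a $\otimes$-generating object $X$ of dimension $\varphi$ must be simple. As $X$ generates $\cC$, the universal grading group $U(\cC)$ is cyclic, generated by the degree of $X$, so $U(\cC)\cong\Z{k}$ for some $k$ and $\cC$ is a faithful $\Z{k}$-graded extension of its adjoint subcategory $\cC_{\mathrm{ad}}$ (the trivially graded component). Writing $X\otimes X^\ast=\mathbf 1\oplus Y$, the count $\dim(X\otimes X^\ast)=\varphi^2=\varphi+1$ forces $Y$ to be a single simple object, self-dual, of dimension $\varphi$, lying in $\cC_{\mathrm{ad}}$.

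The technical heart is to pin down $\cC_{\mathrm{ad}}$. Using the Frobenius--Perron and Galois constraints on categorical dimensions valued in $\ZZ[\varphi]$ together with associativity of the fusion ring — and, to control the subcategory $\langle Y\rangle$ generated by the self-dual object $Y$, the existing classification of fusion categories $\otimes$-generated by a self-dual object of dimension less than $2$ — I would show that all simples of $\cC_{\mathrm{ad}}$ have dimensions that are powers of $\varphi$, and that $\cC_{\mathrm{ad}}$ is $\otimes$-generated by the self-dual dimension-$\varphi$ ``Fibonacci'' objects $Y=Y_0,Y_1,\dots$ extracted from $X^{\otimes i}\otimes Y\otimes (X^\ast)^{\otimes i}$, which the grading element cyclically permutes. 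The relations among the $Y_i$ exhibit $\cC_{\mathrm{ad}}$ as a finite, unitarizable quotient of the free-product fusion ring $\operatorname{Fib}^{\ast N}$ that is invariant under this cyclic relabelling; classifying precisely these quotients — the by-product advertised in the abstract — yields exactly two families, corresponding to $\operatorname{Fib}^{\boxtimes N}$ and to $\mathcal{TT}_3^{\boxtimes N}$.

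Having identified $\cC_{\mathrm{ad}}$, I would reconstruct $\cC$ using the Etingof--Nikshych--Ostrik classification of faithful $G$-graded extensions: such extensions are governed by a homomorphism $G\to\BrPic(\cC_{\mathrm{ad}})$, an obstruction in $H^4(G,S^1)$ that must vanish, and then an $H^3(G,S^1)$-torsor of associator choices. Computing the relevant slice of the Brauer--Picard group of $\operatorname{Fib}^{\boxtimes N}$ (respectively $\mathcal{TT}_3^{\boxtimes N}$) — namely the invertible bimodule categories that can carry a dimension-$\varphi$ object in the degree-$1$ component — one finds the only possibilities are the cyclic-shift actions in the statement, with grading group $\Z{NM}$ (respectively $\Z{2NM}$, the extra factor of $2$ reflecting the order-$2$ symmetry of $\mathcal{TT}_3$ that is applied when the shift wraps around), and the associator ranging over $H^3(\Z{NM},S^1)$ (respectively $H^3(\Z{2NM},S^1)$). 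Conversely one verifies directly that every category on the resulting list is $\otimes$-generated by an object of dimension $\varphi$ and is unitary, so the classification is exactly as stated.

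The step I expect to be the main obstacle is the second one: squeezing the arithmetic and associativity constraints hard enough to realize $\cC_{\mathrm{ad}}$ as a cyclically symmetric finite quotient of $\operatorname{Fib}^{\ast N}$, and then classifying all such quotients. Ruling out exotic fusion rings requires careful bookkeeping, and pinning down the exact cyclic symmetry — in particular the role of the order-$2$ symmetry of $\mathcal{TT}_3$ — is delicate. A secondary difficulty is tracking unitarity, as opposed to mere categorifiability, throughout the argument — notably determining which elements of the $H^3$-torsor of associators admit compatible unitary structures — and correctly computing the portion of $\BrPic$ used in the third step.
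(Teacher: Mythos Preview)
Your outline shares the paper's broad architecture---identify $\cC_{\mathrm{ad}}$ as a finite cyclic quotient of $\operatorname{Fib}^{*N}$, classify those quotients, then reconstruct $\cC$ as a graded extension---but you miss an elementary observation that short-circuits the hardest part of your plan and leaves a gap in what remains.

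After writing $X\otimes X^*\cong\mathbf{1}\oplus\tau$, the paper's next move is to compute $\tau\otimes X$: Frobenius reciprocity gives $X\subseteq\tau\otimes X$, and a dimension count forces $\tau\otimes X\cong X\oplus g$ with $g$ \emph{invertible}, hence $X\cong\tau\otimes g$. The Fibonacci generators of $\cC_{\mathrm{ad}}$ are then $\tau_n:=g^n\tau\,\overline{g}^{\,n}$, which are manifestly simple and are cyclically permuted by conjugation by $g$. Your proposal to ``extract'' $Y_i$ from $X^{\otimes i}\otimes Y\otimes(X^*)^{\otimes i}$ is vaguer: those objects have dimension $\varphi^{2i+1}$, not $\varphi$, and without $g$ it is not clear which summands you mean, why they are simple, or why anything cyclically permutes them (the universal grading group does not act on $\cC_{\mathrm{ad}}$; conjugation by $g$ does).

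More importantly, once there is an invertible in every graded piece, the paper invokes Galindo's theorem---a $G$-graded category with an invertible in each $\cC_g$ is a semi-direct product $\cC_e\overset{\omega}{\rtimes}G$---and never touches the ENO/Brauer--Picard machinery. Your third step asks you to compute the relevant portion of $\BrPic(\operatorname{Fib}^{\boxtimes N})$ and $\BrPic(\mathcal{TT}_3^{\boxtimes N})$, check $H^4$-obstructions, and argue that only cyclic-shift bimodules carry a $\varphi$-dimensional simple in degree $1$; none of this is carried out, and it is substantially harder than the one-line appeal to Galindo. So your route is not wrong in principle, but it trades a two-line Frobenius-reciprocity computation for a Brauer--Picard calculation you have not done, and until you find $g$ your extraction of the $Y_i$ and their cyclic symmetry is a genuine gap. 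You are right that classifying the cyclic quotients of $\operatorname{Fib}^{*N}$ is the bulk of the work; that part the paper does essentially as you anticipate.
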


To summarise the above theorem, we find all \change{unitary} fusion categories $\otimes$-generated by an object of dimension $\frac{1 + \sqrt{5}}{2}$ are constructed as wreath products of the category $\operatorname{Fib}$ or the category $\mathcal{TT}_{3}$, which is perhaps better known as the non-trivial ``fish-like'' quotient of $\operatorname{Fib}* \operatorname{Fib}$, or as the dual even part of the $2D2$ subfactor. While this classification does not reveal the existence of any truly exotic new fusion categories, it does find $\mathcal{TT}_{3} \rtimes \Z{2}$ as a \change{unitary} fusion category $\otimes$-generated by an object of dimension $\frac{1 + \sqrt{5}}{2}$. To the authors best knowledge, this example of a category $\otimes$-generated by an object of small dimension was not previously known.

Our arguments to prove Theorem~\ref{thm:main} reduce quickly to classifying finite \changedd{unitarizable} quotients of $\operatorname{Fib}^{*N}$ that have a symmetry between the $N$ distinct $\operatorname{Fib}$ generators. Hence as a by-product of our main theorem, we produce a classification of such quotients, extending results of Liu \cite{MR3345186} and Izumi-Morrison-Penneys \cite{MR3536926}. The lack of truly exotic categories in Theorem~\ref{thm:main} boils down to the surprising lack of interesting \changedd{unitarizable} quotients of $\operatorname{Fib}^{*N}$ for $N\geq 3$. Towards generalising Theorem~\ref{thm:main}, it appears that classifying cyclic quotients of $\cC^{*N}$ for $\cC$ will play a key role, as from such a quotient one can construct a new category $\otimes$-generated by an object of the same dimension as the $\otimes$-generator of $\cC$. In particular it would be interesting to classify finite quotients of free products of the Ising category, or of free products of the ``tadpole'' categories \cite{MR2559686}.

\begin{rmk}
\change{ We wish to point out that the assumption that the category $\cC$ is unitary in the above theorem can almost certainly be removed. The new theorem would classify fusion categories $\otimes$-generated by an object of Frobenius-Perron dimension $\frac{1 + \sqrt{5}}{2}$. The sticking point for proving this more general theorem, is that there only exists a classification of finite unitary quotients of $\operatorname{Fib} * \operatorname{Fib}$ in the current literature. If one could remove the unitary assumption on this classification result, and obtain a classification of all finite quotients of $\operatorname{Fib} * \operatorname{Fib}$, then the methods of this paper would directly generalise. The expanded classification theorem would now include the Galois conjugates of the categories in the above theorem, under the map $\sqrt{5} \mapsto -\sqrt{5}$.}

\changedd{
While the main Theorem of this paper is a statement of unitary fusion categories, for the majority of the proofs in this paper, we work in the unitarizable setting, that is fusion categories which have a unitary structure but where we have not chosen the unitary structure. This is for two reasons. We can't work in the purely algebraic setting because the results of Liu \cite{MR3345186} only apply to unitary categories. However, we also can't work in the purely unitary setting because the results of Galindo \cite[Theorem 4.1]{MR2796073} only apply in the algebraic setting. Adapting Liu's results to the purely algebraic setting might be quite difficult, while adapting Galindo's result to the unitary setting seems seems tractable.}
\end{rmk}

As every unitary fusion category embeds uniquely into bimodules of the hyperfinite type $\changed{\rm II_1}$ factor $\mathcal{R}$ \cite{MR1749868,MR1055708}, we thus have as a corollary to Theorem~\ref{thm:main} a (non-constructive) classification of finite depth bimodules of $\mathcal{R}$ with index $\frac{1 + \sqrt{5}}{2}$. The Jones index theorem implies that the three smallest dimensions of a finite depth bimodule of $\mathcal{R}$ are $1, \sqrt{2}$, and $\frac{1 + \sqrt{5}}{2}$. Hence the results of this paper, coupled with the known classification of unitary fusion categories generated by an object of dimension 1, give a complete classification of all finite depth bimodules of $\mathcal{R}$ for two of these three smallest dimensions. This motivates the classification of unitary fusion categories $\otimes$-generated by an object of dimension $\sqrt{2}$, which would give as a corollary a classification of finite depth bimodules of $\mathcal{R}$ in the range $[1, \frac{1 + \sqrt{5}}{2}]$.

Our paper is structured as follows.

In Section~\ref{sec:prelim} we give the necessary definitions for this paper. In particular we define semi-direct product categories, free product categories and quotients, and introduce the categories $\operatorname{Fib}$ and $\mathcal{TT}_3$.

In Section~\ref{sec:gold} we prove that every \changedd{unitarizable} fusion category $\otimes$-generated by an object of dimension $\frac{1 +\sqrt{5}}{2}$ is a semi-direct product of a certain quotient of $\operatorname{Fib}^{*N}$. The argument is fairly straightforward, consisting of fusion ring manipulation and Frobenius reciprocity. We end the section by discussing the feasibility of possible generalisations, including removing the \changedd{unitarizable} condition, and changing the dimension of the $\otimes$-generator.

Section~\ref{sec:quotient} represents the bulk of this paper. Here we classify finite \changedd{unitarizable} quotients of $\operatorname{Fib}^{*N}$ satisfying a certain symmetry condition. Using fusion ring arguments in a marathon case by case analysis, we show the surprising result that the only such quotients are $\operatorname{Fib}^{\boxtimes N}$ and $\mathcal{TT}_3^{\boxtimes \frac{N}{2} }$ if $N$ is even. Further, we show that any finite \changedd{unitarizable} quotient of $\operatorname{Fib}^{*N}$ factors through $\operatorname{Fib}^{\boxtimes n} \boxtimes \mathcal{TT}_3^{\boxtimes \frac{m}{2} }$ where $n + 2m = N$. Using this result it should be possible to classify all finite \changedd{unitarizable} quotients of $\operatorname{Fib}^{*N}$. We neglect to follow up on this generalised result as it goes beyond the scope of this paper.

We end the paper with Section~\ref{sec:class}, which ties together the results of this paper to prove Theorem~\ref{thm:main}.

\subsection*{Acknowledgements}
The author would like to thank Dietmar Bisch and Vaughan Jones for useful discussions regarding the problem tackled in this paper. We thank Corey Jones, Scott Morrison, and Dave Penneys for conversations that inspired the author to begin thinking about this problem. \change{We thank Noah Snyder for pointing out a crucial error in a previous version of this paper}. 

\section{Preliminaries}\label{sec:prelim}
For the basic theory of fusion categories we direct the reader to \cite{MR3242743}. Unless explicitly stated, all categories in this paper can be assumed to be fusion categories. That is, semisimple rigid tensor categories, with a finite number of simple objects, and simple unit.

Given an object $X$ in a fusion category $\cC$, then the category $\otimes$-generated by $X$ is the full subcategory of $\cC$ containing all summands of the objects $X^{\otimes n}$ for all $n\in \mathbb{N}$. The category $\otimes$-generated by $X$ is a fusion category by \cite[Lemma 3.7.6]{MR3242743}. We say $\cC$ is $\otimes$-generated by $X$ if the category generated by $X \in \cC$ is $\cC$. 
%
%
%

\subsection*{The Fibonacci category}

A recurring theme in this paper will be the appearance of the Fibonacci categories. A Fibonacci fusion category has two simple objects $\mathbf{1}$ and $\tau$, satisfying the fusion rule
\[  \tau \otimes \tau \cong \mathbf{1} \oplus \tau.\]
There exist two associators for this fusion rule, both of which can be found in \cite{MR2889539}. \change{Only one of these fusion categories has a unitary structure, which is unique.
\begin{dfn}
We write $\operatorname{Fib}$ for the unitary fusion category with Fibonacci fusion rules, and we write $\overline{\operatorname{Fib}}$ for the non-unitary fusion category with Fibonacci fusion rules.
\end{dfn}
}

The categories $\operatorname{Fib}$ and $\overline{\operatorname{Fib}}$ can be realised in a variety of different ways. For example $\operatorname{Fib}$ is monoidally equivalent to: the even part of the $A_4$ planar algebra, the adjoint category of the category of level $3$ integrable representations of $\widehat{\mathfrak{sl}}_2$, and the semi-simplification of the category of $U_{e^\frac{\pi i}{18}}(\mathfrak{g}_2)$ modules. Each of these categories is defined over $\mathbb{Q}[\sqrt{5}]$. The $\overline{\operatorname{Fib}}$ category can be realised as the Galois conjugate of $\operatorname{Fib}$ by $\sqrt{5} \mapsto -\sqrt{5}$.

The category $\operatorname{Fib}$ is the prototypical example of a \change{unitary} fusion category $\otimes$-generated by an object of dimension $\frac{1 +\sqrt{5}}{2}$. We will see later in this paper that every \change{unitary} fusion category $\otimes$-generated by an object of dimension $\frac{1 +\sqrt{5}}{2}$ can be constructed from $\operatorname{Fib}$, though in surprisingly complicated ways. As such, the category $\operatorname{Fib}$ will appear frequently in this paper. To help with computations we introduce the following lemma, allowing a sort of cancellation of Fibonacci objects.

\begin{lemma}\label{lem:towrite}
Let $\cC$ be a fusion category, and $\tau \in \cC$ an object satisfying the Fibonacci fusion rule $\tau \otimes \tau \cong \mathbf{1} \oplus \tau$. Then for any objects $X,Y \in \cC$ we have
\[ \tau\otimes X \cong \tau \otimes Y \implies X \cong Y.\]
\end{lemma}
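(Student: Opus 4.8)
The plan is to apply the functor $\tau \otimes -$ to the object $X$ and then ``cancel'' the $\tau$ by using that $\tau$ has an invertible object-like behaviour with respect to itself, mediated by the Fibonacci fusion rule. Concretely, the key observation is that $\tau$ is self-dual (the only other simple object is $\mathbf{1}$, and $\mathbf{1}$ does not appear in $\tau \otimes \tau$ unless $\tau$ is self-dual, so $\tau^\vee \cong \tau$), hence $\tau \otimes -$ has $\tau \otimes - \cong \tau^\vee \otimes -$ as a two-sided adjoint of itself. Applying $\tau \otimes -$ to the hypothesis $\tau \otimes X \cong \tau \otimes Y$ gives
\[ \tau \otimes \tau \otimes X \cong \tau \otimes \tau \otimes Y, \]
and substituting $\tau \otimes \tau \cong \mathbf{1} \oplus \tau$ yields
\[ X \oplus (\tau \otimes X) \cong Y \oplus (\tau \otimes Y). \]
Since by hypothesis $\tau \otimes X \cong \tau \otimes Y$, the Krull--Schmidt property of the fusion category (unique decomposition into simples) lets us cancel the common summand $\tau \otimes X \cong \tau \otimes Y$ from both sides, leaving $X \cong Y$.

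First I would record that $\tau$ is self-dual: in any fusion category the dual $\tau^\vee$ is simple, and $\Hom(\mathbf{1}, \tau \otimes \tau^\vee) \neq 0$; comparing with $\tau \otimes \tau \cong \mathbf{1} \oplus \tau$, which contains $\mathbf{1}$, forces $\tau^\vee \cong \tau$ (alternatively one notes $\FPdim(\tau)$ satisfies $d^2 = 1 + d$ and the argument is purely at the level of the Grothendieck ring). Actually, self-duality is not strictly needed: one can equally apply $\tau \otimes -$ on the right, or simply argue entirely in the based ring $K_0(\cC)$, where $[\tau]$ acts on the free $\ZZ$-module with basis the simples by a matrix $T$, and $\tau \otimes \tau \cong \mathbf{1} \oplus \tau$ says $T^2 = I + T$, so $T(T - I) = I$... wait, that gives $\det T = \pm 1$ but $T$ need not be invertible over $\ZZ$; instead use $T^2 = I + T$ directly: from $T \cdot v = T \cdot w$ we get $T^2 v = T^2 w$, i.e. $(I+T)v = (I+T)w$, so $v + Tv = w + Tw$, hence $v - w = Tw - Tv = -(Tv - Tw) = 0$. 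Then $[X] = [Y]$ in $K_0(\cC)$, and since $\cC$ is semisimple this means $X \cong Y$.

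The ``main obstacle'' here is essentially nonexistent — this is a short formal argument — but the one point requiring a moment of care is the cancellation step: we must invoke that the Grothendieck ring of a fusion category is a free $\ZZ$-module on the simples (so that equality of classes forces equality of multiplicities), and that semisimplicity then upgrades $[X] = [Y]$ to $X \cong Y$. I would phrase the whole proof at the level of the Grothendieck ring to make the cancellation transparent, noting only that $\tau \otimes \tau \cong \mathbf{1} \oplus \tau$ gives the relation $[\tau]^2 = 1 + [\tau]$, from which $[\tau]\big([X] - [Y]\big) = 0$ implies $[\tau]^2\big([X]-[Y]\big) = 0$, i.e. $(1 + [\tau])\big([X]-[Y]\big) = 0$, and subtracting $[\tau]\big([X]-[Y]\big)=0$ leaves $[X] - [Y] = 0$.
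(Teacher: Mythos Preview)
Your proposal is correct and takes essentially the same approach as the paper: tensor the hypothesis on the left by $\tau$, use $\tau\otimes\tau\cong\mathbf{1}\oplus\tau$ to obtain $X\oplus(\tau\otimes X)\cong Y\oplus(\tau\otimes X)$, and then cancel the common summand. The only cosmetic difference is that the paper spells out the cancellation step explicitly by computing $\dim(W\to -)$ for each simple $W$, whereas you invoke Krull--Schmidt (equivalently, work in $K_0(\cC)$); the digression on self-duality is, as you note, unnecessary and can be dropped.
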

\begin{proof}
Suppose 
\[\tau\otimes X \cong \tau \otimes Y,\]
then we can left multiply by $\tau$ to get
\[  X \oplus \tau\otimes X \cong Y \oplus \tau\otimes Y \cong  Y \oplus \tau\otimes X.\]

\changed{
For each simple object $W \in \cC$, we have
\[ \dim(W \to X \oplus \tau\otimes X) = \dim(W \to X ) + \dim(W \to \tau\otimes X)\]
and
\[ \dim(W \to Y \oplus \tau\otimes X) = \dim(W \to Y ) + \dim(W \to \tau\otimes X)\]

 As the objects $ X \oplus \tau\otimes X $ and $Y \oplus \tau\otimes X$ are isomorphic, we have
 \[ \dim(W \to X \oplus \tau\otimes X) = \dim(W \to Y \oplus \tau\otimes X).\]
 
Therefore, for each simple $W\in \cC$, we have
 \[   \dim(W \to X)  = \dim(W \to Y) . \]
 
 As $\cC$ is a semi-simple category, every object is uniquely determined, up to isomorphism, by the multiplicities of isomorphism classes of simple objects in its decomposition. Thus $X \cong Y$.
}


\end{proof}
\subsection*{Free products and quotients}

Given two fusion categories $\cC$ and $\mathcal{D}$, one can form the free product category $\cC * \mathcal{D}$. In $\cC * \mathcal{D}$ the objects consist of words of the objects in $\cC$ and $\mathcal{D}$, and the morphisms consist of non-crossing morphisms of both $\cC$ and $\mathcal{D}$. Additional details can be found in \cite{MR3536926,MR1437496}.

For this paper we will be interested in the free product category $\operatorname{Fib}^{*N}$ for $N\in \mathbb{N}_{\geq 1}$. If we write $\{\tau_i : 1 \leq i \leq N\}$ for the $N$ generators of the component $\operatorname{Fib}$ categories, then an object of $\operatorname{Fib}^{*N}$ will be of the form
\[  \tau_a  \tau_b \tau_c \cdots \qquad \text{where } a,b,c,\cdots \in \{1,2,\cdots N\}.\]
It is obvious that the category $\operatorname{Fib}^{*N}$ has infinitely many simple objects if $N\geq 2$, and hence is not fusion. However it is possible to take quotients of $\operatorname{Fib}^{*N}$ to obtain \changedd{unitarizable} fusion categories.

\begin{dfn}
A \changedd{unitarizable} quotient of $\operatorname{Fib}^{*N}$ is a rigid \changedd{unitarizable} tensor category $\cC$, along with a faithful dominant functor
\[ \operatorname{Fib}^{*N} \to \cC.     \]
\end{dfn}
Following \cite{MR3536926} it is helpful to work with the subcategory of a \changedd{unitarizable} quotient of $\operatorname{Fib}^{*N}$ which lies in the image of the faithful dominant functor. Taking the idempotent completion of this image category recovers the \changedd{unitarizable} quotient, hence there is no cost to working with this image category. With this perspective, we can think of a \changedd{unitarizable} quotient of $\operatorname{Fib}^{*N}$ as having the same objects as $\operatorname{Fib}^{*N}$, i.e words in $\{\tau_i : 1 \leq i \leq N\}$, but with more morphisms.  These additional morphisms give relations between the objects of the quotient, causing non-isomorphic objects in $\operatorname{Fib}^{*N}$ to become isomorphic, and simple objects to become non-simple. In particular we will be interested in \textit{finite \changedd{unitarizable} quotients of $\operatorname{Fib}^{*N}$}.

\begin{dfn}
A finite \changedd{unitarizable} quotient of $\operatorname{Fib}^{*N}$ is a \changedd{unitarizable} quotient that has finitely many simple objects.
\end{dfn}

A simple example of a \changedd{unitarizable} quotient of $\operatorname{Fib}^{*N}$ is the $N$-fold Deligne product $\operatorname{Fib}^{ \boxtimes N}$, where we impose the relation that the generators $\{\tau_i : 1 \leq i \leq N\}$ pairwise commute. The simple objects of $\operatorname{Fib}^{ \boxtimes N}$ are all words consisting of unordered non-repeating combinations of letters in $\{\tau_i : 1 \leq i \leq N\}$. Thus the rank of $\operatorname{Fib}^{ \boxtimes N}$ is equal to $2^N$, so $\operatorname{Fib}^{ \boxtimes N}$ gives our first example of a finite \changedd{unitarizable} quotient of $\operatorname{Fib}^{*N}$.

A more complicated example of a finite \changedd{unitarizable} quotient is the category $\mathcal{TT}_3$, constructed in \cite{MR3536926, MR3345186} and unpublished work of Izumi.
\begin{dfn}
The \change{unitary} fusion category $\mathcal{TT}_3$ has six simple objects $\mathbf{1}, f^{(2)}, \rho, \sigma, \overline{\sigma}$, and $\mu$, with dimensions $1, \left(\frac{1 + \sqrt{5}}{2}\right)^3, \frac{1 + \sqrt{5}}{2}, \left(\frac{1 + \sqrt{5}}{2}\right)^2,\left(\frac{1 + \sqrt{5}}{2}\right)^2,$ and $ \frac{1 + \sqrt{5}}{2}$ respectively. The fusion rules are given by
{\scriptsize{
$$
\begin{array}{|c|c|c|c|c|c|}
\hline 
\otimes & f^{(2)} & \rho & \sigma & \overline{\sigma} & \mu \\
\hline f^{(2)} & 1 {+} 2 f^{(2)} {+} \sigma {+} \overline{\sigma} {+} \rho {+} \mu & f^{(2)} {+} \overline{\sigma} & f^{(2)} {+} \sigma {+} \overline{\sigma} {+} \mu & f^{(2)} {+} \sigma {+} \overline{\sigma} {+} \rho & f^{(2)} {+} \sigma \\
\hline \rho & f^{(2)} {+} \sigma & 1 {+} \rho & f^{(2)} & \overline{\sigma} {+} \mu & \overline{\sigma} \\
\hline \sigma & f^{(2)} {+} \sigma {+} \overline{\sigma} {+} \rho & \sigma {+} \mu & f^{(2)} {+} \overline{\sigma} & 1 {+} f^{(2)} {+} \mu & f^{(2)} \\
\hline \overline{\sigma} & f^{(2)} {+} \sigma {+} \overline{\sigma} {+} \mu & f^{(2)} & 1 {+} f^{(2)} {+} \rho & f^{(2)} {+} \sigma & \overline{\sigma} {+} \rho \\
\hline \mu & f^{(2)} {+} \overline{\sigma} & \sigma & \sigma {+} \rho & f^{(2)} & 1 {+} \mu \\
\hline
\end{array}
$$
}}
\end{dfn}

The objects $\rho$ and $\mu$ in $\mathcal{TT}_3$ both satisfy Fibonacci fusion, and together they $\otimes$-generate all of $\mathcal{TT}_3$. Thus $\mathcal{TT}_3$ is a finite quotient of $\operatorname{Fib}^{*2}$. 

A theorem of Liu \cite{MR3345186} shows that the only finite \changedd{unitarizable} quotients of $\operatorname{Fib}^{*2}$ are $\operatorname{Fib}$, $\operatorname{Fib}^{\boxtimes 2}$, and $\mathcal{TT}_3$. Outside of this result little is known of finite \changedd{unitarizable} quotients of $\operatorname{Fib}^{*N}$. In Section~\ref{sec:quotient} of this paper we will extend Liu's result to classify finite \textit{rank preserving} \textit{cyclic} \changedd{unitarizable} quotients of $\operatorname{Fib}^{*N}$ for all $N$. 

\begin{dfn}
We say a \changedd{unitarizable} quotient of $\operatorname{Fib}^{*N}$ is rank preserving if $\tau_i \cong \tau_j$ implies $i = j$.
\end{dfn}

\begin{dfn}
We say a \changedd{unitarizable} quotient of $\operatorname{Fib}^{*N}$ is cyclic if there is a monoidal auto-equivalence mapping the generators
\[ \tau_1 \mapsto \tau_2 \mapsto \cdots \mapsto \tau_N \mapsto \tau_1.\]
\end{dfn}

\begin{example}
Consider the finite quotient $\mathcal{TT}_3$ of $\operatorname{Fib}^{*2}$. As $\rho \ncong \mu$ we have that $\mathcal{TT}_3$ is a rank preserving quotient. Further, one has from \cite[Lemma 3.4]{MR3394622} that the planar algebra for $\mathcal{TT}_3$ is generated by a single $6$-box $T$. From the relations for $T$ given in the same paper, we can see that the planar algebra for $\mathcal{TT}_3$ has a \change{*-}automorphism \change{mapping} $T \mapsto -T$. Thus by \cite[Theorem A]{1607.06041} we get a monoidal auto-equivalence of $\mathcal{TT}_3$ that sends
\[  \rho \leftrightarrow \mu , \qquad \text{ and } \quad \sigma \leftrightarrow \overline{\sigma}.\]
Thus $\mathcal{TT}_3$ is a finite rank preserving cyclic \changedd{unitarizable} quotient of $\operatorname{Fib}^{*2}$.
\end{example}

Our motivation for classifying such quotients will become apparent in Section~\ref{sec:gold}. We remark that the techniques of Section~\ref{sec:quotient} should extend in a fairly straightforward manner to classify all finite \changedd{unitarizable} quotients of $\operatorname{Fib}^{*N}$, not just finite rank preserving cyclic quotients. Infinite quotients of $\operatorname{Fib}^{*N}$ however still remain mysterious.
\subsection*{$G$-graded categories}
Let $G$ be a finite group, and $\cC$ a fusion category. We say $\cC$ is $G$-graded if we have an abelian decomposition 
\[  \cC \simeq \bigoplus_G \cC_g,\]
with each $\cC_g$ a non-trivial abelian subcategory of $\cC$, such that the tensor product of $\cC$ restricted to $\cC_g \times \cC_h$ has image in $\cC_{gh}$.

Consider the following monoidal subcategory of $\cC$, which we call the adjoint subcategory of $\cC$.
\[  \ad(\cC) := \langle X \otimes \overline{X} : X\in \operatorname{Irr}(\cC)\rangle.\]
The category $\cC$ is always graded by some group $G$, such that $\cC_e = \ad(\cC)$.

\subsection*{Semi-direct product categories}
An important construction for this paper will be the semi-direct product of a \change{unitary} fusion category $\cC$ with a finite group. The key ingredient for this construction is a \textit{categorical action} of $\cC$.

\begin{dfn}
Let $G$ be a finite group, and $\cC$ a \change{unitary} fusion category. A categorical action of $G$ on $\cC$ is a monoidal functor
\[   \rho : \underline{G} \to \underline{\operatorname{Eq}}(\cC),\]
where $\underline{G}$ is the monoidal category whose objects are the objects of $G$, and whose morphisms are identities, and $\underline{\operatorname{Eq}}(\cC)$ is the monoidal category of monoidal \change{*-}auto-equivalences of $\cC$.
\end{dfn}

Given a \change{unitary} fusion category with a categorical action we can construct the semi-direct product, a new \change{unitary} fusion category.

\begin{dfn}\label{def:semi}
Let $\cC$ be a \change{unitary} fusion category with a categorical $G$ action $\rho$, and let $\omega \in H^3(G,\change{S^1})$. We define the semi-direct product category $\cC \overset{\omega}{\rtimes} G$ as the abelian category
\[  \cC \overset{\omega}{\rtimes} G:=  \bigoplus_G \cC,\] 
with tensor product
\[   (X_1, g_1) \otimes (X_2,g_2) :=  (X_1 \otimes \rho(g_1)[X_2], g_1g_2), \]
and associator
\[   [ (X_1,g_1) \otimes (X_2,g_2)] \otimes (X_3,g_3) \to  (X_1,g_1) \otimes [(X_2,g_2) \otimes (X_3,g_3)] \]
given by the isomorphism
\[ \omega_{g_1,g_2,g_3} (\id_{X_1}\otimes \tau^{g_1}_{X_2,\rho(g_2)[X_3]} )\circ (\id_{X_1} \otimes \id_{\rho(g_1)[X_2]} \otimes \mu_{g1,g2}).      \]
Here $\tau^{g_1}$ is the tensorator for the monoidal functor $\rho(g_1)$, and $\mu$ is the tensorator for the monoidal functor $\rho$.
\change{The $*$-structure for $\cC \overset{\omega}{\rtimes} G$ is directly inherited from the $*$-structure for $\cC$.}
\end{dfn}

It is clear from the definition that $\cC \overset{\omega}{\rtimes} G$ is a $G$-graded category, and further, each graded piece contains an invertible element. The following theorem, due to Galindo, gives a converse to this observation, showing that every graded category with an invertible element in each graded piece must be a semi-direct product.

\begin{theorem}\label{thm:galindo}\cite[Theorem 4.1]{MR2796073}
Let $\cC$ be a $G$-graded fusion category such that each graded piece $\cC_g$ contains an invertible element $U_g$. Then $\cC$ is monoidally equivalent to
\[  \cC_e \overset{\omega}{\rtimes} G\]
for some $\omega \in H^3(G,\mathbb{C}^\times)$, where the categorical action $\rho$ of $G$ on $\cC_e$ is defined on objects by
\[   \rho_g(X) := U_g \otimes X \otimes U_{g^{-1}}.\]
\end{theorem}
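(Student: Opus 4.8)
The plan is to produce an explicit monoidal equivalence $\cC_e \overset{\omega}{\rtimes} G \xrightarrow{\sim} \cC$ out of the chosen invertibles $U_g$, the genuine content being the cohomological bookkeeping that pins down $\omega$ and verifies coherence. First I would normalise so that $U_e = \mathbf 1$, and fix for every $g$ duality data making $U_{g^{-1}}$ (for instance $\overline{U_g}$) a two-sided inverse of $U_g$, with structure isomorphisms $\varepsilon_g\colon U_g\otimes U_{g^{-1}}\xrightarrow{\sim}\mathbf 1$. Then $\rho_g := U_g\otimes(-)\otimes U_{g^{-1}}$ carries $\cC_e$ into $\cC_{geg^{-1}}=\cC_e$ and, together with the $\varepsilon_g$, is a monoidal (in the unitary case $*$-) auto-equivalence of $\cC_e$. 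I would also fix isomorphisms $\phi_{g,h}\colon U_{gh}\xrightarrow{\sim}U_g\otimes U_h$, possible since both objects are invertible and lie in $\cC_{gh}$; conjugating the $\phi_{g,h}$ through the associator of $\cC$ yields tensorators $\mu_{g,h}\colon \rho_g\rho_h\Rightarrow\rho_{gh}$, and the pentagon of $\cC$ applied to $U_{g_1}\otimes U_{g_2}\otimes U_{g_3}$ shows $(\rho,\mu)$ is a categorical action up to a scalar $3$-cochain, which is the $\omega$ appearing in the statement. (When $\Inv(\cC_e)\neq 1$ the $\phi_{g,h}$ cannot be chosen multiplicatively, and one then has to carry the discrepancy $\overline{U_{gh}}\otimes U_g\otimes U_h\in\Inv(\cC_e)$ along inside the tensor structure of $\rho$; this standard crossed-product bookkeeping is the one place real care is needed.)

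Next I would define $F\colon \cC_e\overset{\omega}{\rtimes} G\to\cC$ by $F(X,g):=X\otimes U_g$ on objects, with the evident map on morphisms. Since $-\otimes U_g\colon\cC_e\to\cC_g$ is an equivalence of $\mathbb{C}$-linear (indeed $\cs$-) abelian categories with quasi-inverse $-\otimes U_{g^{-1}}$, and $\cC=\bigoplus_g\cC_g$, the functor $F$ is an equivalence of abelian categories. Its tensorator $J_{(X,g),(Y,h)}\colon (X\otimes U_g)\otimes(Y\otimes U_h)\to X\otimes\rho_g(Y)\otimes U_{gh}$ is assembled from the associators of $\cC$ and the $\phi_{g,h}$, and, comparing with the associator of $\cC_e\overset{\omega}{\rtimes} G$ written out in Definition~\ref{def:semi}, one finds that the only obstruction to $(F,J)$ being monoidal with that associator is precisely the $3$-cochain $\omega$ already extracted, and nothing further.

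The technical heart, and the main obstacle, is the associator bookkeeping. One must check: (i) the pentagon axiom of $\cC$ for $U_{g_1}\otimes U_{g_2}\otimes U_{g_3}\otimes U_{g_4}$ says exactly that $\omega$ is a normalised $3$-cocycle; (ii) after cancelling every associator of $\cC$ against the associator of $\cC_e\overset{\omega}{\rtimes} G$ and using naturality of the $\varepsilon_g$ and $\phi_{g,h}$, the monoidal-functor axioms for $(F,J)$ collapse to identities; and (iii) changing the choices $U_g,\varepsilon_g,\phi_{g,h}$ alters $\omega$ only by a coboundary, so $[\omega]\in H^3(G,\mathbb{C}^\times)$ is well defined. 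The remaining points — well-definedness and full faithfulness of $F$ on Hom-spaces, naturality of $J$, and compatibility with $*$ in the unitary setting — are routine diagram chasing.
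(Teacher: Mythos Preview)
The paper does not prove this statement at all: Theorem~\ref{thm:galindo} is quoted verbatim from Galindo's paper \cite[Theorem~4.1]{MR2796073} and used as a black box in the proof of Theorem~\ref{thm:semi}. So there is nothing in the present paper to compare your argument against.

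That said, your sketch is the standard line of argument and is essentially how Galindo proceeds. One point of exposition deserves tightening. You first assert that isomorphisms $\phi_{g,h}\colon U_{gh}\xrightarrow{\sim} U_g\otimes U_h$ exist ``since both objects are invertible and lie in $\cC_{gh}$'', and only afterwards, in parentheses, note that when $\Inv(\cC_e)\neq 1$ one must carry the discrepancy $\overline{U_{gh}}\otimes U_g\otimes U_h\in\Inv(\cC_e)$. These two sentences are in tension: if that discrepancy is a nontrivial invertible of $\cC_e$ then $U_{gh}\not\cong U_g\otimes U_h$ and no $\phi_{g,h}$ exists. The correct statement is that the section $g\mapsto U_g$ of $\Inv(\cC)\twoheadrightarrow G$ determines a $2$-cocycle $c(g,h):=\overline{U_{gh}}\otimes U_g\otimes U_h$ with values in $\Inv(\cC_e)$, and this $2$-cocycle is absorbed into the tensorator $\mu_{g,h}$ of the categorical action $\rho$ (so that $\rho_g\rho_h$ and $\rho_{gh}$ differ by conjugation with $c(g,h)$, which is inner on $\cC_e$). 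Once you phrase it that way the rest of your outline --- defining $F(X,g)=X\otimes U_g$, building $J$ from associators and the chosen isomorphisms, and reading off $\omega$ from the pentagon on four $U$'s --- goes through. In the specific application of this paper the subtlety is moot anyway, since $\ad(\cC)$ is a quotient of $\operatorname{Fib}^{*N}$ and hence has no nontrivial invertibles.
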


\section{Initial reduction of classification}\label{sec:gold}
The main aim of this paper is to provide a complete classification of \change{unitary} fusion categories $\otimes$-generated by an object of dimension $\frac{1 +\sqrt{5}}{2}$. While initially this may seem like a hopelessly difficult task, we shall see in this Section that such categories all share the same uniform structure. That is they are all semi-direct products of certain finite \changedd{unitarizable} quotients of free products of arbitrarily many copies of $\operatorname{Fib}$. As semi-direct product categories are well understood, we have reduced our classification problem to the somewhat simpler task of classifying finite \changedd{unitarizable} quotients of free products. We will deal with such quotients in the next Section of this paper. For now we focus on proving the following theorem. The proof is fairly straightforward, and boils down to basic fusion ring arguments, and a lot of Frobenius reciprocity.

\begin{theorem}\label{thm:semi}
Let $\cC$ be a \changedd{unitarizable} fusion category $\otimes$-generated by an object of dimension $\frac{1 +\sqrt{5}}{2}$\changed{.} Then $\cC$ is monoidally equivalent to a semi-direct product of a finite rank preserving cyclic \changedd{unitarizable} quotient of $\operatorname{Fib}^{*N}$ by a cyclic group, with the generator of the cyclic group acting on the \change{quotient} by factoring through the given cyclic action.
\end{theorem}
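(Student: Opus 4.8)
The plan is to leverage the grading structure of a fusion category against the rigid fusion-ring constraints imposed by having a $\otimes$-generator $X$ of dimension $\varphi := \frac{1+\sqrt 5}{2}$. First I would analyze $X$ at the level of the fusion ring. Since $\dim(X) = \varphi$ satisfies $\varphi^2 = 1 + \varphi$, the object $X \otimes \overline X$ has dimension $\varphi^2 = 1 + \varphi$ and contains $\mathbf 1$ with multiplicity one; by a dimension count the only possibility is $X \otimes \overline X \cong \mathbf 1 \oplus Y$ where $Y$ is simple of dimension $\varphi$. One checks $Y$ is self-dual (apply duals to the decomposition), and then using Frobenius reciprocity that $Y \otimes Y \cong \mathbf 1 \oplus Y$, i.e.\ $Y$ generates a copy of the Fibonacci fusion rule; by unitarity (and the uniqueness of the unitary Fibonacci category) the subcategory $\otimes$-generated by $Y$ is $\operatorname{Fib}$. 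The same analysis applies to $\overline X \otimes X$, producing another Fibonacci object. More generally, iterating $X \otimes \overline X \otimes X \otimes \overline X \otimes \cdots$ and using $\varphi^n$ arithmetic together with Frobenius reciprocity and Lemma~\ref{lem:towrite} for cancellation, one shows that every simple summand of such alternating words has dimension a power of $\varphi$, and that the simples appearing are controlled by a finite collection of Fibonacci objects $\tau_1,\dots,\tau_N$ which are cyclically permuted by the "shift" coming from tensoring by $X$ on one side. This is where $\operatorname{Fib}^{*N}$ enters: the adjoint subcategory $\ad(\cC)$ receives a faithful dominant functor from $\operatorname{Fib}^{*N}$ realized by the $\tau_i$, so $\ad(\cC)$ is a finite \changedd{unitarizable} quotient of $\operatorname{Fib}^{*N}$, and the cyclic permutation of the $\tau_i$ exhibits it as a rank preserving cyclic quotient.

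The second half of the argument is the grading. By the general fact quoted in the Preliminaries, $\cC$ is graded by a group $G$ with $\cC_e = \ad(\cC)$; concretely $G$ is the "universal grading group". The point is to identify $G$ as cyclic and to show each graded piece contains an invertible object. Tensoring by $X$ moves us between graded components, and since $\cC$ is $\otimes$-generated by $X$, the component containing $X$ generates $G$; moreover $\dim(X) = \varphi$ is not an integer, so $X$ is not invertible, but the component $\cC_g$ containing $X$ must nonetheless contain an invertible object. Here is the key small computation: writing $X$ in its graded component and using that $X \otimes \overline X \in \cC_e$ decomposes as $\mathbf 1 \oplus (\text{Fibonacci object})$, one argues that the simple objects of $\cC_g$ all have dimension in $\varphi^{\ge 0}\cdot\mathbb Z_{\ge1}$ and that there is a unique one, up to the Fibonacci action, of minimal dimension; chasing dimensions of $X^{\otimes k}$ as $k$ runs over a full period of $G$ forces a dimension-$1$ object in each nontrivial component. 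Once each $\cC_g$ has an invertible object, Galindo's Theorem~\ref{thm:galindo} applies: $\cC \simeq \ad(\cC) \overset{\omega}{\rtimes} G$ with the action $\rho_g(Z) = U_g \otimes Z \otimes U_{g^{-1}}$. Finally one checks that $G$ is cyclic — generated by the class of $X$ — and that the generator acts on $\ad(\cC) = \operatorname{Fib}^{*N}/(\text{relations})$ by the cyclic permutation $\tau_1 \mapsto \tau_2 \mapsto \cdots \mapsto \tau_N \mapsto \tau_1$ up to the equivalence already present, i.e.\ the action factors through the cyclic action witnessing that the quotient is cyclic. This yields exactly the claimed structure.

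The main obstacle is the bookkeeping in the first half: showing that the simples of $\ad(\cC)$ are exactly words in finitely many Fibonacci objects $\tau_i$, that these $\tau_i$ are permuted cyclically, and that the induced functor $\operatorname{Fib}^{*N} \to \ad(\cC)$ is faithful and dominant. Faithfulness of the non-crossing-morphisms functor is subtle — it amounts to knowing no unexpected collapse occurs beyond the relations we identify — but it should follow from the universal property of the free product together with the observation that the $\tau_i$ generate $\ad(\cC)$ (dominance) and that any morphism between words in the $\tau_i$ in $\cC$ is already visible in $\operatorname{Fib}^{*N}$ because the only new isomorphisms forced on us are the graded/commutation relations, which are exactly the quotient relations. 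A secondary subtlety is ensuring the cyclic group $G$ acts on the quotient compatibly with the permutation of generators — this is essentially forced because conjugation by the invertible $U_g$ is a monoidal auto-equivalence that must send Fibonacci objects to Fibonacci objects and hence permute the $\tau_i$, and tracking how it interacts with the $X \otimes \overline X$ decomposition pins down the permutation as the cyclic shift. Everything else is routine fusion-ring arithmetic with $\varphi$ and repeated use of Lemma~\ref{lem:towrite} and Frobenius reciprocity.
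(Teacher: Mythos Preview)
Your overall architecture is right and matches the paper's: analyze $X\otimes\overline X$ to get a Fibonacci object $\tau$, identify $\ad(\cC)$ as a finite rank preserving cyclic quotient of $\operatorname{Fib}^{*N}$, show each graded piece has an invertible, and invoke Theorem~\ref{thm:galindo}. But the middle of your argument has a genuine gap, and it is exactly the step the paper handles with one clean computation that you are missing.

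The gap is your production of the invertible object. You write that ``chasing dimensions of $X^{\otimes k}$ as $k$ runs over a full period of $G$ forces a dimension-$1$ object in each nontrivial component'' --- but no such global dimension argument is given, and it is not clear one exists. The paper instead does the following: from $\dim(X\otimes\overline X\to\tau)=1$ one gets by Frobenius reciprocity $\dim(X\to\tau\otimes X)=1$, hence $\tau\otimes X\cong X\oplus g$; a dimension count ($\varphi^2-\varphi=1$) forces $g$ to be an invertible simple. One more Frobenius reciprocity gives $X\cong\tau\otimes g$. This single computation does all the work you are trying to do by vague dimension-chasing: it places an invertible $g$ in the same graded component as $X$ (so every component has one), and it simultaneously \emph{defines} the $\tau_i$ for you as $\tau_n:=g^n\otimes\tau\otimes\overline g^{\,n}$, with $N$ the least positive integer for which this conjugation returns to $\tau$. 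The cyclic permutation of the $\tau_i$ is then literally conjugation by $g$, which is exactly the action Galindo's theorem produces.

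This also dissolves the ``bookkeeping obstacle'' you flag. You try to extract the $\tau_i$ from alternating words $X\overline X X\overline X\cdots$ and then separately argue a cyclic auto-equivalence permutes them; with $X\cong\tau g$ in hand one simply computes $X^n\overline X^{\,n}\cong(\bigotimes_{i=0}^{n-1}\tau_i)(\bigotimes_{i=n-1}^{0}\tau_i)$, so $\ad(\cC)$ is visibly $\otimes$-generated by the $\tau_i$, and the functor from $\operatorname{Fib}^{*N}$ is dominant by construction and faithful by the universal property of the free product (each factor embeds as the Fib generated by $\tau_i$). Your worry about faithfulness is overstated; the real missing idea is the $\tau\otimes X\cong X\oplus g$ step.
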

\begin{proof}
Let $\cC$ be a \changedd{unitarizable} fusion category $\otimes$-generated by an object $X$ of dimension $\frac{1 +\sqrt{5}}{2}$. As $X\otimes  \overline{X}$ must contain the tensor unit, we have that 
\[  X \otimes \overline{X} \cong \mathbf{1} \oplus \tau\]
with $\tau$ some object in $\cC$. A dimension count shows $\tau$ has dimension $\frac{1 + \sqrt{5}}{2}$. In particular $\tau$ is a simple object. Further, as both $\mathbf{1}$ and $X \otimes \overline{X}$ are self-dual, we also have that $\tau$ is self-dual. Hence $\tau$ generates a $\operatorname{Fib}$ subcategory of $\cC$, and in particular satisfies the fusion rule $\tau \otimes \tau \cong \mathbf{1} \oplus \tau$.

As $\dim(X \otimes \overline{X} \to \tau) = 1$, we can apply Frobenius reciprocity to get $\dim(X \to \tau \otimes X) = 1$. As $X$ is simple, we get $X \subseteq \tau \otimes X$ and so 
\[
   \tau \otimes X \cong X \oplus g
\]
with $g$ an object of $\cC$. Another dimension count shows $g$ is a simple object of dimension $1$. In particular $\dim(\tau \otimes X \to g) = 1$, and another application of Frobenius reciprocity gives $\dim( \tau\otimes g \to X) = 1$. As both $\tau\otimes g$ and $X$ are simple, we thus have 
\begin{equation}\label{eq:Xtau}
  X \cong \tau \otimes g.
 \end{equation}

As $\cC$ is a fusion category, the order of $g$ is finite. Let $N$ be the smallest integer (which must exist as the order of $g$ is finite) such that $g^N \otimes \tau \otimes \overline{g}^N = \tau$. Consider the $N$ distinct simple objects
\[     \tau_n := g^n \otimes \tau \otimes \overline{g}^n : n = 0,1,\cdots , N-1 .\]
It is straightforward to verify that each of these distinct objects satisfies the fusion 
\[  \tau_n \otimes \tau_n \cong \mathbf{1} \oplus \tau_n,\]
and further each $\tau_n$ has dimension $\frac{1 +\sqrt{5}}{2}$. Hence each $\tau_n$ generates a $\operatorname{Fib}$ subcategory of $\cC$, and so the collection of objects $\{\tau_n : n = 1,2,\cdots , N\}$ together $\otimes$-generate a subcategory of $\cC$ that is equivalent to a finite rank preserving quotient of $\operatorname{Fib}^{*N}$. As $\cC$ is \changedd{unitarizable}, this finite rank preserving quotient must also be \changedd{unitarizable}.

As $X$ $\otimes$-generates $\cC$, the adjoint subcategory of $\cC$ is $\otimes$-generated by the objects
\[   X^n\otimes \overline{X}^n \text{ for } n\in \mathbb{N}.\]
Using Equation~\eqref{eq:Xtau} we can write
\[   X^n\otimes \overline{X}^n \cong (\tau \otimes g)^n \otimes  (\overline{g} \otimes \tau)^n \cong \left(\bigotimes_{i= 0}^{n-1}  \tau_i \right) \otimes \left(\bigotimes_{i= n-1}^{0}  \tau_i \right)  .\]
Hence, the adjoint subcategory of $\cC$ is $\otimes$-generated by the $N$ simple objects $\{\tau_n\}$, and thus the adjoint subcategory of $\cC$ is equivalent to a finite rank preserving quotient of $\operatorname{Fib}^{*N}$.

As $\cC$ is $\otimes$-generated by $X$, the category $\cC$ is a cyclic extension of $\ad(\cC)$, with $X$ living in the 1-graded component of this cyclic grading (written additively). Furthermore, Equation~\eqref{eq:Xtau} shows that $g$ also lives in the 1-graded component of the cyclic grading, which implies that every graded component of the grading has an invertible object. Thus Theorem~\ref{thm:galindo} implies that $\cC$ is a semi-direct product of its adjoint subcategory by a cyclic group, and hence is a semi-direct product of a finite rank preserving \changedd{unitarizable} quotient of $\operatorname{Fib}^{*N}$ by a cyclic group. The same theorem shows the generator of the cyclic group acts on the generators of $\operatorname{Fib}^{*N}$ by sending $\tau_i \mapsto \tau_{i+1}$. Hence the finite rank preserving \changedd{unitarizable} quotient of $\operatorname{Fib}^{*N}$ must also be cyclic.
\end{proof}

Before we move on to classifying finite rank preserving cyclic quotients of $\operatorname{Fib}^{*N}$, let us discuss possible generalisations of the above theorem. The key step in the above proof is finding the invertible object $g$, which lives in the same graded component of $\cC$ as the generator $X$. With this object in hand we know we are looking at a semi-direct product category, and hence classification is possible. If we consider a generator of dimension $\sqrt{2}$ then it is fairly easy to convince oneself that we can never get such an invertible object, the problem being that we run into issues with the grading. However if we restrict to objects of dimension $2\cos\left(\frac{\pi}{2N+1}\right)$ then these grading issues seem to not appear. Hence it should be possible to prove a generalisation of the above theorem for fusion categories $\otimes$-generated by an object of dimension $2\cos\left(\frac{\pi}{2N+1}\right)$. We stress that we have not worked through the details of this generalisation, and do not claim it as conjecture.

Our lack of current interest in the mentioned generalisation stems from the fact we would have to classify finite \changedd{unitarizable} quotients of $T_N * T_N$ for all $N$ to provide a generalised classification result. Given the difficulty of the $T_2 *T_2$ case \cite{MR3345186,MR3536926}, the general $N$ case appears completely out of reach. Supposing a breakthrough insight is discovered, allowing for the classification of finite \changedd{unitarizable} quotients of $T_N * T_N$ for all $N$, then it is likely the techniques from this paper would generalise to provide a generalised classification result.

The other generalisation one can consider is to relax the \changedd{unitarizable} condition on the category $\cC$, and to instead consider fusion categories $\otimes$-generated by an object of Frobenius-Perron dimesion $\frac{1 +\sqrt{5}}{2}$. The above theorem has a straightforward generalisation, now showing that any such category is a semi-direct product of either $\operatorname{Fib}^{*N}$ or of the Galois conjugate $\overline{\operatorname{Fib}}^{*N}$. The issue with pursuing this generalisation is that \change{there is currently no classification of non-\changedd{unitarizable} quotients of $\operatorname{Fib}*\operatorname{Fib}$}. Thus the results of the following Section can not be extended to give a classification of \change{not necessarily \changedd{unitarizable}} finite cyclic quotients of $\operatorname{Fib}^{*N}$. However, this is the only stumbling block. \change{Assuming the expected result that every quotient of $\operatorname{Fib}*\operatorname{Fib}$ is \changedd{unitarizable}}, then the rest of this paper would generalise directly to give a complete classification of fusion categories $\otimes$-generated by an object of Frobenius-Perron dimension $\frac{1 +\sqrt{5}}{2}$.

\section{Finite Cyclic Quotients of $\operatorname{Fib}^{*N}$}\label{sec:quotient}

With Theorem~\ref{thm:semi} in mind, we have motivation to classify finite rank preserving cyclic \changedd{unitarizable} quotients of $\operatorname{Fib}^{*N}$. This is \textit{a priori} a very difficult task, given the complexity that was required to classify finite \changedd{unitarizable} quotients of $\operatorname{Fib} *\operatorname{Fib}$. However, miraculously we are able to prove the following classification result, purely using fusion ring arguments.

\begin{theorem}\label{thm:fin}
Let $\cC$ be a rank preserving finite cyclic \changedd{unitarizable} quotient of $\operatorname{Fib}^{*N}$, then $\cC$ is monoidally equivalent to either
\[  \operatorname{Fib}^{\boxtimes N},\]
or, if $N$ is even, 
\[   \mathcal{TT}_3^{\boxtimes \frac{N}{2}}.\]
\end{theorem}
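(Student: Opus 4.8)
The plan is to reduce the classification to understanding the pairwise interactions of the generators $\tau_1,\dots,\tau_N$, distill the content into a single statement about three generators, and then combine this with the cyclic symmetry through a short combinatorial argument.

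\textbf{Step 1 (pairs).} For each $i\ne j$ the subcategory $\langle\tau_i,\tau_j\rangle$ is a finite \changedd{unitarizable} quotient of $\operatorname{Fib}^{*2}$, so by Liu's theorem \cite{MR3345186} it is one of $\operatorname{Fib}$, $\operatorname{Fib}^{\boxtimes 2}$, $\mathcal{TT}_3$; since the quotient is rank preserving $\tau_i\ncong\tau_j$, which rules out $\operatorname{Fib}$. Thus for every pair either $\tau_i\otimes\tau_j\cong\tau_j\otimes\tau_i$ (say $\tau_i,\tau_j$ \emph{commute}) or $\langle\tau_i,\tau_j\rangle\cong\mathcal{TT}_3$. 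Record this in the \emph{non-commuting graph} $G$ on $\{1,\dots,N\}$ with an edge $ij$ exactly when $\langle\tau_i,\tau_j\rangle\cong\mathcal{TT}_3$; the cyclic auto-equivalence makes $G$ invariant under the rotation $i\mapsto i+1$.

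\textbf{Step 2 (the three-generator lemma --- the marathon).} The technical heart is the assertion that \emph{in any finite \changedd{unitarizable} quotient of $\operatorname{Fib}^{*3}$, at most one of the three pairs of generators generates a copy of $\mathcal{TT}_3$}; equivalently, the non-commuting graph of such a quotient has at most one edge. I would prove this purely with fusion-ring arithmetic: first establish that every simple object of a quotient of $\operatorname{Fib}^{*N}$ has dimension a power of $\varphi=\tfrac{1+\sqrt5}{2}$, and set up the bookkeeping for how the $\tau_i$ act on simples (using $\tau_i\otimes\tau_i\cong\mathbf 1\oplus\tau_i$, the cancellation Lemma~\ref{lem:towrite}, and Frobenius reciprocity). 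Then suppose two pairs, say $\langle\tau_1,\tau_2\rangle$ and $\langle\tau_2,\tau_3\rangle$, are each $\cong\mathcal{TT}_3$, and run the case analysis on the possible fusion outcomes of the words $\tau_1\tau_2\tau_3$, $\tau_3\tau_2\tau_1$, $\tau_1\tau_3$, etc., deriving a contradiction in each branch: the $\mathcal{TT}_3$ fusion table forces $\sigma,\overline\sigma,\rho,\mu,f^{(2)}$-type summands on both ``sides'' of $\tau_2$, and confronting these with the third generator over-determines the ring. This is the step I expect to be the main obstacle --- there is no symmetry to exploit, the objects involved have large dimension, and the casework is long; it is essentially the content of Section~\ref{sec:quotient}.

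\textbf{Step 3 (propagation and identification).} Granting Step 2, $G$ has maximum degree $\le 1$, since two edges at a common vertex $j$ would produce a $\operatorname{Fib}^{*3}$-subquotient on $\{i,j,k\}$ with two $\mathcal{TT}_3$-pairs. Hence $G$ is a matching, and a rotation-invariant matching on a cycle of length $N$ is either empty or, forced by the fact that the unique $G$-edge through $0$ must coincide with its image $\{-a,0\}$ under rotation by $-a$ (so $a\equiv -a$, i.e.\ $N=2a$), equal to $\{\,\{i,i+N/2\}: i\,\}$ with $N$ even. In the first case all $\tau_i$ pairwise commute, and building the comparison functor out of the chosen isomorphisms $\tau_i\otimes\tau_j\xrightarrow{\sim}\tau_j\otimes\tau_i$ (a coherence check) exhibits $\cC$ as a dominant quotient of $\operatorname{Fib}^{\boxtimes N}$; being a rank-preserving finite quotient it is all of $\operatorname{Fib}^{\boxtimes N}$. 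In the second case the generators pair off as $(\tau_i,\tau_{i+N/2})$, each pair generating $\mathcal{TT}_3$ and commuting with the rest, so the same argument realises $\cC$ as $\mathcal{TT}_3^{\boxtimes N/2}$. (Equivalently, one may organise Steps 2--3 as an induction on $N$ proving the stronger statement that every finite \changedd{unitarizable} quotient of $\operatorname{Fib}^{*N}$ factors through some $\operatorname{Fib}^{\boxtimes n}\boxtimes\mathcal{TT}_3^{\boxtimes m}$ with $n+2m=N$, and then impose the cyclic symmetry at the end via the same matching argument; this is presumably how Section~\ref{sec:quotient} is arranged.)
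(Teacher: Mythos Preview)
Your overall architecture matches the paper's: reduce to pairs via Liu, prove a three-generator lemma (the paper's Lemma~\ref{lem:hard}), propagate to show every rank-preserving finite \changedd{unitarizable} quotient factors through some $\operatorname{Fib}^{\boxtimes n}\boxtimes\mathcal{TT}_3^{\boxtimes m}$, and then impose the cyclic symmetry. Your rotation-invariant matching argument in Step~3 is a clean rephrasing of Remark~\ref{rmk:syms} and the ensuing corollary, and your parenthetical at the end of Step~3 is exactly how the paper organises things.

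There are, however, two places where your sketch diverges from the paper in ways that matter.

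\textbf{Step 2.} Your proposed attack on the three-generator lemma --- assume two overlapping $\mathcal{TT}_3$ pairs and chase contradictions through the $\mathcal{TT}_3$ fusion table --- is not the paper's method, and as written it does not explain where \emph{finiteness} enters. The braid-type relations $aba\cong bab$, $bcb\cong cbc$, together with $ac\cong ca$ or $aca\cong cac$, are not by themselves visibly inconsistent. The paper's actual engine is Corollary~\ref{cor:minn}: finiteness forces a smallest $n$ for which a coincidence
\[
\underbrace{abcab\cdots}_{\text{length }n}\ \cong\ \underbrace{bcabc\cdots}_{\text{length }n}
\]
holds among tri-alternating (``cyclic'') words, and Lemma~\ref{lem:simps} guarantees all cyclic words of length $\le n$ are simple. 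The marathon case analysis is then organised by $n\bmod 6$ together with which of the three pairs commute (four sublemmas), repeatedly rewriting one side of the length-$n$ coincidence via the braid relations and contradicting either simplicity of a shorter cyclic word or the shift non-isomorphisms of Lemma~\ref{lem:shift}. Your ``direct fusion table'' plan may be workable in principle, but without a substitute for Corollary~\ref{cor:minn} you have not explained how to get started; this is the genuine missing idea in your sketch.

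\textbf{Step 3.} The clause ``being a rank-preserving finite quotient it is all of $\operatorname{Fib}^{\boxtimes N}$'' (and its $\mathcal{TT}_3^{\boxtimes N/2}$ analogue) is not automatic; it is the content of Lemmas~\ref{lem:fibbox} and~\ref{lem:TTbox}. The paper proves these by showing inductively that all ``source-simple'' words remain simple and pairwise non-isomorphic in any rank-preserving quotient, and then matching global dimensions. This is not deep but is several pages of careful bookkeeping (especially in the $\mathcal{TT}_3$ case, where the commutation relations are less uniform), and without it you have only shown that $\cC$ is a quotient of the target, not that it equals it.
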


Essentially this theorem shows that the only interesting finite cyclic \changedd{unitarizable} quotients of $\operatorname{Fib}^{*N}$ are rank 2. While we did find this result surprising (and somewhat disappointing, as it excludes the possibility of potential new exotic \changedd{unitarizable} fusion categories), it does have some parallels to \changed{a classical group theory result. 

One can consider the category $\operatorname{Fib}$ as being the higher categorical analogue of the group $\Z{2}$, in the sense that $\operatorname{Fib}$ has two simple objects, but with non-pointed fusion. With this analogy in mind, we can draw a further analogy between the classification of finite quotients of $\operatorname{Fib}^{*N}$, and the Coxeter classification of finite quotients of $\Z{2}^{*N}$ \cite{coxeter}. 

In the case where $N=2$, the classical Coxeter classification simply gives the dihedral groups, while the quantum analogue follows from Liu's classification of quotients of $\operatorname{Fib}*\operatorname{Fib}$. While there are an infinite family of dihedral groups, Liu's classification shows that only the first two of the quantum counterparts exist. Hence, it seems reasonable to expect the that the classification of finite quotients of $\operatorname{Fib}^{*N}$ should be more significantly more restrictive than the classification of finite quotients of $\Z{2}^{*N}$. Given that the examples appearing in Coxeter's classification were sparse to begin with, this point of view helps to explain the surprising lack of interesting examples in Theorem~\ref{thm:fin}. 

}

 Let us briefly sketch the proof of Theorem~\ref{thm:fin}.

We begin by considering the finite \changedd{unitarizable} quotients of $\operatorname{Fib}^{*3}$, with generators $a$, $b$, and $c$. We are able to show that we must have a relation of the form
\[ \underbrace{abcab\cdots }_\text{length n}   \cong  \underbrace{bcabc\cdots }_\text{length n}, \]
for some finite $n$. Further, as each pair of generators must generate a finite \changedd{unitarizable} quotient of $\operatorname{Fib}^{*2}$, we must have the further three relations 
\[   ab \cong ba \qquad \text{ or } \qquad aba \cong bab \]
and 
\[   bc \cong cb \qquad \text{ or } \qquad bcb \cong cbc \]
and 
\[   ac \cong ca \qquad \text{ or } \qquad aca  \cong cac. \]

A marathon case by case analysis shows that in fact, one of the generators commutes with the other two. With this result in hand one can quickly prove that any rank preserving finite \changedd{unitarizable} quotient of $\operatorname{Fib}^{*N}$ must factor through 
\[  \operatorname{Fib}^{\boxtimes n} \boxtimes \mathcal{TT}_3^{\boxtimes m}\]
where $n + \frac{m}{2} =N$.

At this point we use our cyclic quotient assumption to see that a rank preserving cyclic \changedd{unitarizable} quotient of $\operatorname{Fib}^{*N}$ must be a quotient of either $\operatorname{Fib}^{\boxtimes N}$, or $ \mathcal{TT}_3^{\boxtimes \frac{N}{2}}$ if $N$ is even. Finally we show that only the trivial quotients of these categories exist.

The results of this Section, while not particularly technical, are extremely long and messy. A ``better'' proof could involve generalising the results of \cite{MR3345186} to give a finite bound on the $n$ such that 
\[ \underbrace{abcab\cdots }_\text{length n}   \cong  \underbrace{bcabc\cdots }_\text{length n}. \]
From here, one would now just have a finite number of cases to consider to show that one of the generators commutes with the other two, significantly shortening the length of the proof.

For now we just have our marathon brute force proof. Before we begin, we introduce some notation
\begin{dfn}
Let $\cC$ be a \changedd{unitarizable} quotient of $\operatorname{Fib}^{*3}$. We say a word in $\cC$ is cyclic if the letters are tri-alternating.
\end{dfn}

\begin{example}
Consider a \changedd{unitarizable} quotient of $\operatorname{Fib}^{*3} = \langle \tau_1, \tau_2, \tau_3 \rangle $. The words 
\[   \tau_1 \tau_2 \tau_3 \tau_1 \tau_2  \quad \text{ and } \quad \tau_3 \tau_2 \tau_1 \tau_3\]
are cyclic, but the word
\[  \tau_1 \tau_2 \tau_3 \tau_2 \]
is not.
\end{example}

We begin with the proofs of this Section with the following lemma, inspired by \cite[Proposition 3.2]{MR3536926}.

\begin{lemma}\label{lem:simps}
Let $\cC$ be a \changedd{unitarizable} quotient of $\operatorname{Fib}^{*3}$ with generators $a$, $b$, and $c$, and let $n\in \mathbb{N}$ be the smallest such that either
\[ \underbrace{abcabc\cdots }_\text{length n}   \cong  \underbrace{bcabca\cdots }_\text{length n}\]
or
\[ \underbrace{bcabca\cdots }_\text{length n}   \cong  \underbrace{cabcab\cdots }_\text{length n}\]
or
\[ \underbrace{cabcab\cdots }_\text{length n}   \cong  \underbrace{abcabc\cdots }_\text{length n}.\]
then all cyclic words of length $n$ or less are simple. If there is no such $n$, then all cyclic words are simple.
\end{lemma}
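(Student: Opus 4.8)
The plan is to argue by strong induction on the length $k$ of a cyclic word, proving that every cyclic word of length $k \le n$ is simple (and, in the case where no such $n$ exists, that every cyclic word of every length is simple). Throughout I would repeatedly use Lemma~\ref{lem:towrite}: since each generator $a$, $b$, $c$ satisfies Fibonacci fusion, one can "cancel" a leading (or trailing) generator from both sides of an isomorphism, or from a summand computation, provided the remaining words start/end with the same letter. The base case $k \le 1$ is trivial: the empty word is $\mathbf 1$, and the length-one words $a$, $b$, $c$ are simple by hypothesis (they generate $\operatorname{Fib}$ subcategories).

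For the inductive step, let $w$ be a cyclic word of length $k$ with $2 \le k \le n$, say $w = x\,v$ where $x \in \{a,b,c\}$ is the first letter and $v$ is a cyclic word of length $k-1$, which is simple by the inductive hypothesis. I want to show $w = x \otimes v$ is simple. Writing $v = y\,v'$ with $y \ne x$ the second letter, Fibonacci fusion gives $x \otimes (x \otimes v) \cong v \oplus (x\otimes v)$, so it suffices to control the decomposition of $x \otimes v$. The key point is to bound the multiplicity $\dim(u \to x\otimes v)$ for a simple summand $u$: by Frobenius reciprocity this equals $\dim(v \to x \otimes u)$, and one should argue that any such $u$ is itself a cyclic word of length $k-1$ or $k+1$ (the length cannot drop further without contradicting simplicity of the shorter cyclic words, which are already known to be simple and distinct — the rank-preserving/distinctness of shorter cyclic words is where the hypothesis on $n$ enters). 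Then, using that $x$ appears as a summand of $x\otimes x$ but $x \otimes y$ for $y\ne x$ has no invertible summands (again Fibonacci fusion inside each $\operatorname{Fib}$ subcategory, together with the non-crossing structure of the free product before passing to the quotient), one shows $x\otimes v$ has exactly one simple summand, hence is simple. The minimality of $n$ is precisely what guarantees that for $k < n$ the candidate cyclic words of length $k-1$ and $k+1$ remain pairwise non-isomorphic, so no unexpected collapse occurs; for $k = n$ itself, one of the three displayed relations becomes an equality but this still happens only at the top length and does not destroy simplicity of $w$ itself.

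I would phrase the multiplicity bound as the technical heart: show that for a cyclic word $w = xv$ of length $\le n$ and any simple $u \subseteq x\otimes v$, the word $u$ has length exactly $k-1$ or $k+1$, and $u$ is again cyclic. This is proved by inducting on $k$ and invoking Lemma~\ref{lem:towrite} to strip matching outer generators from $\dim(v \to x\otimes u)$, reducing to the analogous statement for shorter cyclic words where the inductive hypothesis (all shorter cyclic words simple and distinct) applies; the free-product relations guarantee that before reaching length $n$ no two distinct cyclic words of these lengths coincide. Once this bound is in place, a short counting argument with $x\otimes x\otimes v \cong v\oplus x\otimes v$ forces $x\otimes v$ to be simple. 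Finally, if there is no such $n$, the same induction runs without ever terminating, and every cyclic word is simple.

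\medskip

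The main obstacle I anticipate is the multiplicity/length-control step: ruling out that $x\otimes v$ acquires an unexpected simple summand which is a shorter cyclic word that has already "collapsed." This is exactly what the minimality of $n$ is designed to prevent, but making the bookkeeping precise — tracking which cyclic words of length $k-1$ and $k+1$ can appear, and verifying via Frobenius reciprocity and Lemma~\ref{lem:towrite} that none of them coincide below length $n$ — is the delicate part, and I expect it mirrors closely the argument of \cite[Proposition 3.2]{MR3536926} that the lemma statement cites as inspiration.
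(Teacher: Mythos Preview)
Your overall plan --- strong induction on the word length, Frobenius reciprocity, and using minimality of $n$ --- is the same as the paper's. But the implementation you sketch has a genuine gap at exactly the point you flag as the main obstacle. You want to show that every simple summand $u$ of $x\otimes v$ is itself a cyclic word of length $k-1$ or $k+1$. There is no reason this should hold: in an arbitrary quotient of $\operatorname{Fib}^{*3}$ the simple objects are merely summands of words and carry no well-defined ``length'', so your heuristic that ``the length cannot drop further'' has no content. (You also invoke a rank-preserving hypothesis, which this lemma does not assume.) And even if the claim were true, appealing to cyclic words of length $k+1$ lies outside the inductive hypothesis, so the argument would not close.

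The paper sidesteps this entirely by computing $\dim(w\to w)$ directly, never attempting to identify the summands of $w$. Writing $w=\underbrace{abc\cdots cab}_{k}$ and applying Frobenius reciprocity at \emph{both} ends (move the leading $a$ of the target to the source, and the trailing $b$ of the source to the target), one gets
\[
\dim(w\to w)=\dim\bigl(\underbrace{aabc\cdots bca}_{k}\to\underbrace{bca\cdots cabb}_{k}\bigr).
\]
Expanding $aa\cong\mathbf 1\oplus a$ and $bb\cong\mathbf 1\oplus b$ splits each side into a cyclic word of length $k-1$ and one of length $k-2$, giving four Hom terms, all between cyclic words of length $\le k-1$. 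By the inductive hypothesis these are simple; the two mixed-length terms vanish by dimension, the $(k-2)$-to-$(k-2)$ term equals $1$, and the remaining term is
\[
\dim\bigl(\underbrace{abc\cdots}_{k-1}\to\underbrace{bca\cdots}_{k-1}\bigr),
\]
which is $0$ precisely because $k-1<n$ and $n$ is minimal. Hence $\dim(w\to w)=1$. Note that this also explains your observation about $k=n$: the critical cross term lives at length $k-1$, so even when $k=n$ the minimality hypothesis still forces it to vanish.
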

\begin{proof}
This proof inducts on the length $k$ of the cyclic word.

\begin{trivlist}\leftskip=2em
\item \textbf{$k=0$:}

As $\cC$ is a tensor category, the tensor unit is simple.

\vspace{1em}

\item \textbf{$k=1$:}

As the objects $a$, $b$, and $c$ have dimension $\frac{1 +\sqrt{5}}{2}$, they must be simple.

\vspace{1em}

\item Inductive step for $k\leq n$:

Suppose that all cyclic words of length $k-1$ or less are simple.  Consider a cyclic word of length $k$, we consider nine cases, depending on if the word starts with $a$, $b$, or $c$, and if the word ends in $a$, $b$, or $c$. We work though the case where the cyclic word begins with $a$ and ends with $b$, and leave the other eight cases to the reader, as they are near identical. We compute

\begin{align*}
\dim(  \underbrace{abc\cdots cab }_\text{length k} \to \underbrace{abc\cdots cab }_\text{length k}) =& \dim(  \underbrace{aabc\cdots bca }_\text{length k } \to \underbrace{bca\cdots cabb }_\text{length k }) \\  
															=& \dim(  \underbrace{abc\cdots bca }_\text{length k-1 }\oplus  \underbrace{bca\cdots bca }_\text{length k-2 } \to\underbrace{bca\cdots bca }_\text{length k-2 }\oplus  \underbrace{bca\cdots cab}_\text{length k-2 )}) \\
															=& \dim(  \underbrace{abc\cdots bca }_\text{length k-1 } \to\underbrace{bca\cdots bca }_\text{length k-2 }) +\dim(  \underbrace{bca\cdots bca }_\text{length k-2 } \to\underbrace{bca\cdots bca }_\text{length k-2 }) \\ &+ \dim(  \underbrace{abc\cdots bca }_\text{length k-1 } \to\underbrace{bca\cdots cab }_\text{length k-1 }) + \dim(  \underbrace{bca\cdots bca }_\text{length k-2 } \to\underbrace{bca\cdots cab }_\text{length k-1 })\\
															=&  0+ 1+0+0 = 1.
\end{align*}
Where the last step uses the inductive hypothesis, along with the assumption in the statement of the lemma that 
\[\underbrace{abc\cdots bca }_\text{length k-1 } \ncong \underbrace{bca\cdots cab }_\text{length k-1 }.\]
Thus $ \underbrace{abc\cdots cab }_\text{length k}$ is simple.
\end{trivlist}
We can repeat the inductive step until $k = n+1$, hence proving the statement of the lemma.
\end{proof}

Restricting our attention to finite \changedd{unitarizable} quotients, we get the following Corollary.

\begin{cor}\label{cor:minn}
Let $\cC$ be a finite \changedd{unitarizable} quotient of $\operatorname{Fib}^{*3}$ with generators $a$, $b$, and $c$. Then, \changed{up to a relabelling of the three generators},  there exists an $n\in \mathbb{N}$ such that 
\[ \underbrace{abcab\cdots }_\text{length n}   \cong  \underbrace{bcabc\cdots }_\text{length n}\]
and that all cyclic words of length $n$ or less are simple.
\end{cor}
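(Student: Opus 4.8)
The plan is to extract a finiteness statement from Lemma~\ref{lem:simps} and then use a symmetry (relabelling) argument to reduce the three possible ``first collision'' relations to the single asserted form. First I would argue that the integer $n$ appearing in Lemma~\ref{lem:simps} must actually exist when $\cC$ is finite: if no such $n$ existed, then by the final sentence of Lemma~\ref{lem:simps} \emph{every} cyclic word would be simple. But there are infinitely many distinct cyclic words of growing length (e.g.\ the words $\underbrace{abcabc\cdots}_{\text{length }k}$ for $k \in \mathbb{N}$), and a dimension count shows the cyclic word of length $k$ has dimension $\left(\frac{1+\sqrt5}{2}\right)^{k}$, so these are pairwise non-isomorphic simple objects. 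This contradicts finiteness of $\cC$. Hence some finite $n$ with one of the three displayed isomorphisms holds, and we may take $n$ minimal with this property.

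Next I would massage the three cases in Lemma~\ref{lem:simps} into the one uniform statement. Writing the three candidate relations schematically, the first relates the words starting $abc\ldots$ and $bca\ldots$, the second relates $bca\ldots$ and $cab\ldots$, and the third relates $cab\ldots$ and $abc\ldots$. These three pairs are precisely the images of the single pair $\{abc\ldots,\ bca\ldots\}$ under the three cyclic relabellings $a\mapsto b\mapsto c\mapsto a$ of the generators. Since the statement we are proving is already ``up to a relabelling of the three generators,'' whichever of the three cases realises the minimal $n$, after applying the appropriate cyclic relabelling we obtain exactly
\[ \underbrace{abcab\cdots}_{\text{length }n} \cong \underbrace{bcabc\cdots}_{\text{length }n}. \]
One small point to check here is that cyclically relabelling the generators $a,b,c$ genuinely induces a permutation of the unitarizable quotients of $\operatorname{Fib}^{*3}$ (it just precomposes the dominant functor $\operatorname{Fib}^{*3}\to\cC$ with the evident permutation autoequivalence of $\operatorname{Fib}^{*3}$), so ``relabelling'' is legitimate and does not change whether $\cC$ is finite, unitarizable, or rank preserving.

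Finally, with this minimal $n$ chosen (and the relation normalised as above by relabelling), Lemma~\ref{lem:simps} applies directly: its hypothesis is exactly that $n$ is the smallest integer for which one of the three collisions occurs, and its conclusion is that all cyclic words of length $n$ or less are simple. So that half of the Corollary is immediate once the setup is in place. The only real content is the existence argument in the first paragraph; I expect the main (very mild) obstacle to be making the non-isomorphism of the infinitely many cyclic words airtight — i.e.\ confirming via the dimension function (a ring homomorphism to $\mathbb{R}$, sending each generator to $\frac{1+\sqrt5}{2}$) that cyclic words of distinct lengths have distinct dimensions and hence cannot be isomorphic simples, which forces $\cC$ to have infinitely many simple objects and contradicts finiteness. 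Everything after that is bookkeeping with the relabelling symmetry.
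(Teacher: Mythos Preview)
Your proposal is correct and follows essentially the same route as the paper's own proof: deduce existence of $n$ by contraposition from the infinitude clause of Lemma~\ref{lem:simps}, take $n$ minimal, relabel the generators to normalise to the first of the three relations, and read off the simplicity of short cyclic words directly from Lemma~\ref{lem:simps}. Your dimension argument (that cyclic words of different lengths have distinct dimensions $\left(\tfrac{1+\sqrt5}{2}\right)^{k}$ and hence are non-isomorphic simples) is a small but welcome clarification the paper leaves implicit when it asserts ``an infinite number of simple objects.''
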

\begin{proof}
If there didn't exist a $n\in \mathbb{N}$ such that either
\[ \underbrace{abcab\cdots }_\text{length n}   \cong  \underbrace{bcabc\cdots }_\text{length n}\]
or
\[ \underbrace{bcabc\cdots }_\text{length n}   \cong  \underbrace{cabca\cdots }_\text{length n}\]
or
\[ \underbrace{cabca\cdots }_\text{length n}   \cong  \underbrace{abcab\cdots }_\text{length n},\]
then Lemma~\ref{lem:simps} would imply that $\cC$ would have an infinite number of simple objects, contradicting the assumption that $\cC$ is a finite \changedd{unitarizable} quotient. Thus there must exist such a $n\in \mathbb{N}$. Assume that this $n$ is chosen to be minimal, and further, use up a degree of freedom between the labels of the generators $a$, $b$, and $c$ so that $n$ is the smallest integer such that 
\[ \underbrace{abcab\cdots }_\text{length n}   \cong  \underbrace{bcabc\cdots }_\text{length n}.\]
As $n$ was chosen minimally, we have that all cyclic words of length $n-1$ or less are distinct. Thus Lemma~\ref{lem:simps} gives that all cyclic words of length $n$ or less are simple.
\end{proof}

Hence we have shown that any finite \changedd{unitarizable} quotient of $\operatorname{Fib}^{*3}$ must have a relation of the form
\[ \underbrace{abcab\cdots }_\text{length n}   \cong  \underbrace{bcabc\cdots }_\text{length n}\]
for some $n$. We will now show that this relation is inconsistent with most other relations we can put on $\operatorname{Fib}^{*3}$. Key to finding these inconsistencies will be the following non-isomorphisms.

\begin{lemma}\label{lem:shift}
Let $\cC$ be a rank preserving \changedd{unitarizable} quotient of $\operatorname{Fib}^{*3}$ with generators $a$, $b$, and $c$ satisfying the relations 
\[  aba \cong bab, \qquad bcb \cong cbc,\quad \text{ and } \quad aca \cong cac.\]
Then we have for any $n\in \mathbb{N}$
\begin{align*}
\underbrace{abcabc\cdots }_\text{length $n$}\quad  \ncong\quad &   b \underbrace{abcabc\cdots }_\text{length $n-1$} \\
\underbrace{bcabca\cdots }_\text{length $n$}\quad \ncong \quad&   c \underbrace{bcabca\cdots }_\text{length $n-1$} \\
\underbrace{cabcab\cdots }_\text{length $n$} \quad\ncong\quad &   a \underbrace{cabcab\cdots }_\text{length $n-1$} \\
\underbrace{acbacb\cdots }_\text{length $n$} \quad\ncong \quad&   c  \underbrace{acbacb\cdots }_\text{length $n-1$} \\
\underbrace{bacbac\cdots }_\text{length $n$} \quad\ncong\quad &  a  \underbrace{bacbac\cdots }_\text{length $n-1$} \\
\underbrace{cbacba\cdots }_\text{length $n$} \quad\ncong \quad&  b  \underbrace{cbacba\cdots }_\text{length $n-1$}.
\end{align*}
\end{lemma}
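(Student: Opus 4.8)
The plan is to prove all six non-isomorphisms simultaneously by a single induction on $n$, since by the cyclic symmetry of the three generators (and the left-right symmetry swapping the role of the two ``spiral directions'' $abc\cdots$ versus $acb\cdots$), all six statements are equivalent under relabelling. So it suffices to treat the first one, $\underbrace{abcabc\cdots}_{\text{length }n} \ncong b\underbrace{abcabc\cdots}_{\text{length }n-1}$, and then invoke symmetry for the rest.

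First I would dispose of the small cases. For $n=1$ the claim is $a \ncong b$, which is exactly the rank-preserving hypothesis. For $n=2$ the claim is $ab \ncong ba$; but the relations $aba\cong bab$ etc. are assumed (and in a rank-preserving finite quotient of $\operatorname{Fib}^{*2}$ the only two options are $ab\cong ba$ or $aba\cong bab$ by Liu's theorem), so under our hypotheses $ab\ncong ba$ — one should note this is why the hypotheses $aba\cong bab$, $bcb\cong cbc$, $aca\cong cac$ are imposed: they force the ``dihedral'' rather than ``commuting'' behaviour for each pair, so no word can be shortened by a naive commutation.

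For the inductive step, suppose the word $w_n := \underbrace{abcabc\cdots}_{\text{length }n}$ were isomorphic to $b w_{n-1}$ where $w_{n-1} := \underbrace{abcabc\cdots}_{\text{length }n-1}$. The idea is to left-multiply both sides by the appropriate generator and use the Fibonacci fusion rule $\tau\otimes\tau \cong \mathbf{1}\oplus\tau$ together with Lemma~\ref{lem:towrite} (the cancellation lemma) to derive a shorter instance of a forbidden isomorphism, contradicting the inductive hypothesis. Concretely: the first letter of $w_n$ is $a$ and the first letter of $bw_{n-1}$ is $b$. Left-multiply the supposed isomorphism $w_n \cong b w_{n-1}$ by $b$. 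On the left we get $bw_n = b a (\text{rest})$; on the right we get $bbw_{n-1} \cong (\mathbf{1}\oplus b)w_{n-1} \cong w_{n-1} \oplus bw_{n-1}$. Meanwhile $b w_n = bab(cabc\cdots) \cong (aba)(cabc\cdots)$ using $bab\cong aba$, which is $a \cdot (bacabc\cdots)$; one then matches summands on both sides, peeling off and using Lemma~\ref{lem:towrite} to cancel the leading generator, reducing to a relation of length $n-1$ (or $n-2$) of exactly the type forbidden by the inductive hypothesis (possibly after relabelling generators via the cyclic/reflection symmetry). The bookkeeping is that after cancellation one lands on one of the six displayed forbidden isomorphisms at a strictly smaller length.

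The main obstacle will be the combinatorial bookkeeping in the inductive step: keeping careful track of which generator is being cancelled, making sure the word one lands on after using $bab\cong aba$ and the Fibonacci rule really is one of the six forbidden spiral-shortening relations (and not some unrelated word), and handling the parity of $n$ (the word $\underbrace{abc\cdots}_{n}$ ends in $a$, $b$, or $c$ according to $n \bmod 3$, which affects which relation $aba\cong bab$, $bcb\cong cbc$, or $aca\cong cac$ gets used). I expect this to require splitting into cases on $n \bmod 3$ (or $n\bmod 6$ once the reflection direction is tracked), each case being a short but fiddly computation of the same flavour as the displayed computation in the proof of Lemma~\ref{lem:simps}. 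The clean way to organize it is to prove a single statement — ``no word of the spiral form can have its length decreased by one by prepending the `wrong' generator'' — phrased so that the six displayed inequalities are its instances, and run the induction on that statement, quoting Lemma~\ref{lem:towrite} at each cancellation and the rank-preserving plus dihedral hypotheses at the base.
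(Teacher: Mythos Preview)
Your overall strategy---induction on $n$, using the braid relations and the cancellation Lemma~\ref{lem:towrite} to reduce to a shorter forbidden isomorphism---is exactly what the paper does, and the six statements do indeed get proved simultaneously with the induction cycling among them. But two parts of your write-up are off.

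First, the $n=2$ base case. Your appeal to Liu's classification is not valid here: the lemma does not assume the quotient is finite, so Liu's theorem does not apply, and in any case Liu does not assert that $ab\cong ba$ and $aba\cong bab$ are mutually exclusive \emph{a priori}. The paper instead gives a direct argument: if $ab\cong ba$, then $aba\cong b\oplus ab$ and $bab\cong a\oplus ab$, so $aba\cong bab$ forces $a\cong b$, contradicting rank preservation. You should do this computation rather than cite Liu.

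Second, the inductive step is much simpler than you expect, and in particular requires \emph{no} case analysis on $n\bmod 3$ or $n\bmod 6$. There is no need to left-multiply by $b$ and produce a direct sum that then has to be ``matched''. Instead, work directly on the supposed isomorphism $\underbrace{abc\cdots}_{n}\cong b\,\underbrace{abc\cdots}_{n-1}$: the right-hand side begins $bab\cdots$, so rewrite it as $aba\,\underbrace{cabc\cdots}_{n-3}$ using $bab\cong aba$. Now both sides begin with $ab$, and two applications of Lemma~\ref{lem:towrite} strip these off to give
\[
\underbrace{cabc\cdots}_{n-2}\ \cong\ a\,\underbrace{cabc\cdots}_{n-3},
\]
which is exactly the third of the six forbidden isomorphisms at length $n-2$. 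The rewriting touches only the first three letters, so the residue of $n$ modulo $3$ never enters. Your left-multiplication route can be pushed through (after reassembling $w_{n-1}\oplus b w_{n-1}$ back into $bbw_{n-1}$ and cancelling), but it is a detour to the same reduction.
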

\begin{proof}
We induct on the length $n$.
\begin{trivlist}\leftskip=2em
\item \textbf{$n = 1$:}

This case follows as $\cC$ is a rank preserving quotient, so $a\ncong b \ncong c$.

\vspace{1em}

\item \textbf{$n = 2$:}

If $ab \cong ba$ then we can use the relation $aba =bab$ to get 
\[  b \oplus ab \cong a \change{\oplus} ab\]
which implies $a \cong b$, contradicting the assumption that $\cC$ is a rank preserving quotient. Thus $ab \ncong ba$. 

A similar argument shows $bc \ncong cb$ and $ac \ncong ca$.

\vspace{1em}

\item Inductive Step:

Suppose the result holds true for $n-2$, and aiming for a contradiction, assume 
\[\underbrace{abcabc\cdots }_\text{length $n$} \cong    b \underbrace{abcabc\cdots }_\text{length $n-1$}.\]
Using the relation $aba \cong bab$ we can rewrite the right hand side to get
\[\underbrace{abcabc\cdots }_\text{length $n$} \cong   aba \underbrace{cabcab\cdots }_\text{length $n-3$}.\]
We now apply Lemma~\ref{lem:towrite} twice to get
\[\underbrace{cabcab\cdots }_\text{length $n-2$} \cong   a \underbrace{cabcab\cdots }_\text{length $n-3$},\]
which is a contradiction to the inductive hypothesis. Hence 
\[\underbrace{abcabc\cdots }_\text{length $n$} \ncong   b \underbrace{abcabc\cdots }_\text{length $n-1$}.\]
The other $5$ cases follow near identically.
\end{trivlist}
\end{proof}

We now prove the main technical lemma of this Section, showing that any finite quotient of $\operatorname{Fib}^{*3}$ must have a large amount of commutativity.

\begin{lemma}\label{lem:hard}
Let $\cC$ be a rank preserving finite \changedd{unitarizable}quotient of $\operatorname{Fib}^{*3}$ with generators $a$, $b$, and $c$. Then one of the generators commutes with the other two.
\end{lemma}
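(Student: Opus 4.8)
The plan is to run a case analysis on the pairwise relations among $a$, $b$, $c$. By Liu's classification of finite \changedd{unitarizable} quotients of $\operatorname{Fib}^{*2}$, each pair of generators either commutes or satisfies a braid-type relation $xyx\cong yxy$. If two of the three pairs commute, say $ab\cong ba$ and $ac\cong ca$, then $a$ commutes with both $b$ and $c$ and we are done. So we may assume at least two of the three pairs satisfy the braid relation; by relabelling, either exactly one pair commutes (say $bc\cong cb$, while $aba\cong bab$ and $aca\cong cac$) or all three pairs are braid-type ($aba\cong bab$, $bcb\cong cbc$, $aca\cong cac$). The goal is to derive a contradiction with finiteness in both of these remaining configurations, which (together with the case where two pairs commute) forces the commuting-pair situation and hence the conclusion.

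First I would dispose of the all-braid case. Here Lemma~\ref{lem:shift} applies verbatim, giving the six families of non-isomorphisms. On the other hand, by Corollary~\ref{cor:minn}, after relabelling there is a minimal $n$ with $\underbrace{abcab\cdots}_{\text{length }n}\cong\underbrace{bcabc\cdots}_{\text{length }n}$, and all cyclic words of length $\le n$ are simple. The idea is to play the relation from Corollary~\ref{cor:minn} against the braid relations and Lemma~\ref{lem:towrite} (Fibonacci cancellation) to produce a shorter relation among cyclic words, contradicting minimality of $n$ — or else to rewrite the length-$n$ relation into one of the forbidden forms of Lemma~\ref{lem:shift}. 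Concretely, one takes the length-$n$ relation, uses a braid move at one end to expose a repeated letter, cancels via Lemma~\ref{lem:towrite}, and tracks which of the two patterns (cyclic word of length $n-1$ versus a word starting with the "wrong" letter) results; the parity of $n$ and which letter the word starts/ends with will determine which contradiction one lands in. I expect a handful of subcases here indexed by $n \bmod 3$ and by the starting letter, each a short manipulation.

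The genuinely hard case, and the main obstacle, is the mixed configuration $bc\cong cb$, $aba\cong bab$, $aca\cong cac$. Here Lemma~\ref{lem:shift} does not apply as stated (its hypotheses require all three braid relations), so one needs an analogue of the non-isomorphisms adapted to one commuting pair, or a direct argument. The strategy I would pursue: since $b$ and $c$ commute, the subcategory they generate is $\operatorname{Fib}^{\boxtimes 2}$, so words in $b,c$ are governed by a simple abelian rule; one can then try to show the cyclic relation of Corollary~\ref{cor:minn} forces either $ab\cong ba$ or $ac\cong ca$ (collapsing to the already-handled commuting case) or an infinite family of distinct simple objects. This likely requires re-deriving a shift lemma in this setting — showing $\underbrace{abcabc\cdots}_{n}\ncong b\underbrace{abcabc\cdots}_{n-1}$ and its variants using only $aba\cong bab$, $aca\cong cac$, and $bc\cong cb$ — and then combining it with the Corollary~\ref{cor:minn} relation exactly as in the all-braid case. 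The bookkeeping is heavier because the two surviving braid relations are "centred" on the same generator $a$, so the rewriting is less symmetric; I would expect this to be where most of the length of the argument lives, and where one must be careful that the induction in the shift lemma still closes. Once both the all-braid and the mixed cases yield contradictions with finiteness, the only surviving possibility is that two of the three pairs commute, giving a generator commuting with the other two.
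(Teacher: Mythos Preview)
Your overall architecture matches the paper: reduce via Liu's classification to the all-braid and mixed configurations, then derive a contradiction with the simplicity conclusion of Corollary~\ref{cor:minn}. For the all-braid case your sketch (subcases by $n \bmod 3$, rewrite via braid moves, land in a forbidden form from Lemma~\ref{lem:shift}) is essentially what the paper does, though the actual rewrites are more intricate than ``a braid move at one end'': the paper looks at palindromic words of length $2n-1$, rewrites them in two different ways using the braid relations, applies the Corollary~\ref{cor:minn} relation to one side, and then extracts information via Frobenius reciprocity, splitting further into subcases modulo~$6$.

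There is a genuine gap in your handling of the mixed case. You relabel twice: once to place the commuting pair as $b,c$, and again (via Corollary~\ref{cor:minn}) to pin down the specific relation $\underbrace{abc\cdots}_n \cong \underbrace{bca\cdots}_n$. These two relabellings are not compatible: Corollary~\ref{cor:minn} already consumes the freedom to permute $a,b,c$, and the residual $b\leftrightarrow c$ swap that preserves ``$bc$ commute'' does not permute the three possible cyclic relations among themselves. The paper fixes the Corollary~\ref{cor:minn} labelling \emph{first} and then treats three separate mixed subcases (only $a,b$ commute; only $b,c$ commute; only $a,c$ commute). These three cases are not interchangeable---the rewrites and the problematic small values of $n$ differ.

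More importantly, your proposed method for the mixed case---re-deriving an analogue of Lemma~\ref{lem:shift} with one commutation and two braid relations---is not what the paper does and is harder than necessary. The paper's argument is much cleaner: in each mixed subcase one combines the single commutation with one of the braid relations to rewrite the length-$n$ cyclic word so that a doubled letter appears in the interior, e.g.\ with $ab\cong ba$ one has
\[
\underbrace{bcabca\cdots}_n \;\cong\; \underbrace{bcbaca\cdots}_n \;\cong\; \underbrace{cbcaca\cdots}_n \;\cong\; \underbrace{cbccac\cdots}_n,
\]
and the exposed $cc$ forces the left-hand cyclic word to be non-simple, contradicting Corollary~\ref{cor:minn}. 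This disposes of all $n\ge 6$ or $7$ at once; the finitely many remaining small $n$ are then checked individually using Lemma~\ref{lem:towrite}. No shift-lemma analogue is required.
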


 As mentioned earlier the proof of this lemma consists of brute force case bashing. We begin by setting up the cases. \changed{Let us write $a$, $b$, and $c$ for the three generators of this quotient. Using Corollary~\ref{cor:minn} we can arrange the generators $a,b$, and $c$ in such a way so that there exists a $n \in \mathbb{N}$ such that
\begin{equation}\label{eq:perm} 
\underbrace{abcab\cdots }_\text{length n}   \cong  \underbrace{bcabc\cdots }_\text{length n}
\end{equation}
and such that all cyclic words of length $n$ or less are simple.

With this fixed labelling of the generators, we can consider the four distinct cases:

\begin{itemize}
\item None of $a$, $b$, or $c$ commute with each other,
\item Only $a$ and $b$ commute with each other, 
\item Only $b$ and $c$ commute with each other, 
\item Only $a$ and $c$ commute with each other.
\end{itemize}

We are able to show that each of these four cases can not occur, hence showing that one of the generators commutes with the other two. To help with readability, we will deal with each of these cases in separate lemmas.

\begin{sublOne}
There are no rank preserving finite \changedd{unitarizable} quotients of $\operatorname{Fib}^{*3}$ such that none of $a$, $b$, or $c$ commute with each other.
\end{sublOne}}

\begin{proof}
Consider such a quotient. As none of $a$, $b$, or $c$ commute with each other, we must have from the classification of finite \change{unitary} quotients of $\operatorname{Fib}*\operatorname{Fib}$ the relations 
\begin{equation}
\label{eq:zippers} aba \cong bab,\qquad bcb \cong cbc,\quad  \text{ and }\quad  aca \cong cac.
\end{equation}
Here we split the proof up into three cases, depending on $n \mod 3$.
\begin{trivlist}\leftskip=3em
\item \textbf{Case: $n \equiv 0 \pmod 3$}

If $n \equiv 0 \pmod 3$, then the cyclic word $ \underbrace{bcabc\cdots }_\text{length n}$ ends in $a$. Consider the word
\[  \underbrace{bcabc\cdots abc }_\text{length $n-1$}  a \underbrace{cba\cdots cbacb }_\text{length $n-1$}.\]

If $n \equiv 0 \pmod 6$ then we can use the relations~\eqref{eq:zippers} to write 
\[ \underbrace{bcabc\cdots abc }_\text{length $n-1$}  a \underbrace{cba\cdots cbacb }_\text{length $n-1$} \cong \underbrace{cbacb\cdots acb }_\text{length $n-1$}  a \underbrace{bca\cdots bcabc }_\text{length $n-1$} ,\]
and if $n \equiv 3 \pmod 6$ then we can use the relations~\eqref{eq:zippers} to write 
\[ \underbrace{bcabc\cdots abc }_\text{length $n-1$}  a \underbrace{cba\cdots cbacb }_\text{length $n-1$} \cong \underbrace{bacba\cdots cba}_\text{length $n-1$}  c \underbrace{abc\cdots abcab }_\text{length $n-1$}.\]
Hence we further break the proof up into two more sub-cases.

\begin{trivlist}\leftskip=4.5em
\item \textbf{Case: $n \equiv 0 \pmod 6$}

As $n \equiv 0 \pmod 6$ we have the relation
\[ \underbrace{bcabc\cdots abc }_\text{length $n-1$}  a \underbrace{cba\cdots cbacb }_\text{length $n-1$} \cong \underbrace{cbacb\cdots acb }_\text{length $n-1$}  a \underbrace{bca\cdots bcabc }_\text{length $n-1$}.\]
We can use relation~\eqref{eq:perm} on the first $n$ letters of the left hand side of the above equation to get
\begin{equation}\label{eq:nfold}
 \underbrace{abcab\cdots cab }_\text{length $n-1$}  c \underbrace{cba\cdots cbacb }_\text{length $n-1$}\cong \underbrace{cbacb\cdots acb }_\text{length $n-1$}  a \underbrace{bca\cdots bcabc }_\text{length $n-1$}.
\end{equation}
Evaluating the left hand side using the $\operatorname{Fib}$ fusion rules reveals that $a$ is a simple summand. Hence
\[  \dim (a \to \underbrace{cbacb\cdots acb }_\text{length $n-1$}  a \underbrace{bca\cdots bcabc }_\text{length $n-1$}) > 0,\]
and an application of Frobenius reciprocity gives
\[  \dim ( a \underbrace{cbacb\cdots acb }_\text{length $n-1$}\to   \underbrace{cba\cdots cbacb }_\text{length $n-1$}a    ) > 0.\]
As cyclic words of length $n$ are simple, we thus have
\[   \underbrace{cba\cdots cba }_\text{length $n$} \cong  \underbrace{acb\cdots acb }_\text{length $n$}.\]
Returning to equation~\eqref{eq:nfold} we can now substitute in the above relation to the first $n$ letters of the right hand side to get 
\[ \underbrace{abcab\cdots cab }_\text{length $n-1$}  c \underbrace{cba\cdots cbacb }_\text{length $n-1$} \cong  a \underbrace{cbacb\cdots acb }_\text{length $n-1$} \underbrace{bca\cdots bcabc }_\text{length $n-1$},\]
to which we can apply Lemma~\ref{lem:towrite} to get
\[ \underbrace{bcab\cdots cab }_\text{length $n-2$}  c \underbrace{cba\cdots cbacb }_\text{length $n-1$} \cong  \underbrace{cbacb\cdots acb }_\text{length $n-1$} \underbrace{bca\cdots bcabc }_\text{length $n-1$}.\]
Expanding the left hand of this equation reveals a simple $b$ summand, and expanding the right hand side completely gives
\[  b \subset \left(\bigoplus_{i = 1}^{n-1}  \underbrace{cbacba \cdots }_\text{length $i$} \otimes\underbrace{\cdots abcabc }_\text{length $i-1$}\right) \oplus \mathbf{1}.\]
Thus there exists an $i$ such that 
\[  \dim ( b \to \underbrace{cbacba \cdots }_\text{length $i$} \otimes\underbrace{\cdots abcabc }_\text{length $i-1$}) > 0.\]
Applying Frobenius reciprocity gives 
\[  \dim ( b \underbrace{ cbacba\cdots }_\text{length $i-1$} \to \underbrace{cba cba\cdots }_\text{length $i$})  > 0,\]
which, as $\underbrace{cbacba \cdots }_\text{length $i$}$ is simple, implies that 
\[   b \underbrace{ cbacba \cdots }_\text{length $i-1$} \cong \underbrace{cbacba \cdots }_\text{length $i$}.\]
However this is a contradiction to Lemma~\ref{lem:shift}.

\vspace{1em}

\item \textbf{Case: $n \equiv 3 \pmod 6$}

As $n \equiv 3 \pmod 6$ we have the relation
\[ \underbrace{bcabc\cdots abc }_\text{length $n-1$}  a \underbrace{cba\cdots cbacb }_\text{length $n-1$} \cong \underbrace{bacba\cdots cba}_\text{length $n-1$}  c \underbrace{abc\cdots abcab }_\text{length $n-1$}.\]
We can use relation~\eqref{eq:perm} on the first $n$ letters of the left hand side of the above equation to get
\[
 \underbrace{abcab\cdots cab }_\text{length $n-1$}  c \underbrace{cba\cdots cbacb }_\text{length $n-1$} \cong \underbrace{bacba\cdots cba}_\text{length $n-1$}  c \underbrace{abc\cdots abcab }_\text{length $n-1$}.
\]
Evaluating the left hand side using the $\operatorname{Fib}$ fusion rules reveals that $a$ is a simple summand. Hence
\[  \dim (a \to \underbrace{bacba\cdots cba}_\text{length $n-1$}  c \underbrace{abc\cdots abcab }_\text{length $n-1$}) > 0,\]
and an application of Frobenius reciprocity gives
\[  \dim (a  \underbrace{bacba\cdots cba}_\text{length $n-1$} \to  \underbrace{bacba\cdots cbac}_\text{length $n$} ) > 0.\]
As $\underbrace{bacba\cdots cbac}_\text{length $n$}$ is simple we thus have
\[ a  \underbrace{bacba\cdots cba}_\text{length $n-1$} \cong  \underbrace{bacba\cdots cbac}_\text{length $n$}.\]
However this is a contradiction to Lemma~\ref{lem:shift}.
\end{trivlist}

\item \textbf{Case: $n \equiv 1 \pmod 3$}

If $n \equiv 1 \pmod 3$, then the cyclic word $ \underbrace{bcabc\cdots }_\text{length n}$ ends in $b$. Consider the word
\[  \underbrace{bca\cdots bca }_\text{length $n-1$}  b \underbrace{acb\cdots acb }_\text{length $n-1$}.\]
If $n \equiv 1 \pmod 6$ then we can use the relations~\eqref{eq:zippers} to write 
\[  \underbrace{bca\cdots bca }_\text{length $n-1$}  b \underbrace{acb\cdots acb }_\text{length $n-1$} \cong \underbrace{bac\cdots bac}_\text{length $n-1$}  b \underbrace{cab\cdots cab }_\text{length $n-1$} ,\]
and if $n \equiv 4 \pmod 6$ then we can use the relations~\eqref{eq:zippers} to write 
\[  \underbrace{bca\cdots bca }_\text{length $n-1$}  b \underbrace{acb\cdots acb }_\text{length $n-1$} \cong  \underbrace{cba\cdots cba }_\text{length $n-1$}  c \underbrace{abc\cdots abc }_\text{length $n-1$}.\]
Hence we further break the proof up into two more sub-cases.

\begin{trivlist}\leftskip=4.5em
\item \textbf{Case: $n \equiv 1 \pmod 6$}

If $n \equiv 1 \pmod 6$ then we have
\[  \underbrace{bca\cdots bca }_\text{length $n-1$}  b \underbrace{acb\cdots acb }_\text{length $n-1$} \cong \underbrace{bac\cdots bac}_\text{length $n-1$}  b \underbrace{cab\cdots cab }_\text{length $n-1$} ,\]
to which we can apply relation~\eqref{eq:perm} on the first $n$ letters of the left hand side of the above equation to get
\[  \underbrace{abc\cdots abc }_\text{length $n-1$}  a \underbrace{acb\cdots acb }_\text{length $n-1$} \cong \underbrace{bac\cdots bac}_\text{length $n-1$}  b \underbrace{cab\cdots cab }_\text{length $n-1$} .\]
Evaluating the left hand side reveals that
\[ a \subset  \underbrace{bac\cdots bac}_\text{length $n-1$}  b \underbrace{cab\cdots cab }_\text{length $n-1$},\]
from which we can use a similar argument as in the $n \equiv 3 \pmod 6$ case to see that
\[      a \underbrace{bac\cdots bac }_\text{length $n-1$} \cong  \underbrace{bac\cdots acb}_\text{length $n$}.\] 
However this is a contradiction to Lemma~\ref{lem:shift}.

\vspace{1em}

\item \textbf{Case: $n \equiv 4 \pmod 6$}

If $n \equiv 4 \pmod 6$ then we have
\[  \underbrace{bca\cdots bca }_\text{length $n-1$}  b \underbrace{acb\cdots acb }_\text{length $n-1$} \cong  \underbrace{cba\cdots cba }_\text{length $n-1$}  c \underbrace{abc\cdots abc }_\text{length $n-1$},\]
to which we can apply relation~\eqref{eq:perm} on the first $n$ letters of the left hand side of the above equation to get
\begin{equation}\label{eq:2ndquant}
  \underbrace{abc\cdots abc }_\text{length $n-1$}  a \underbrace{acb\cdots acb }_\text{length $n-1$} \cong \underbrace{cba\cdots cba}_\text{length $n-1$}  c \underbrace{abc\cdots abc }_\text{length $n-1$} .
\end{equation}
Evaluating the left hand side reveals that
\[ a \subset  \underbrace{cba\cdots cba}_\text{length $n-1$}  c \underbrace{abc\cdots abc }_\text{length $n-1$},\]
from which we can use a similar argument as in the $n \equiv 0 \pmod 6$ case to see that
\[      \underbrace{acb\cdots cba }_\text{length $n$} \cong  \underbrace{cba\cdots bac}_\text{length $n$}.\] 
We can apply the above relation to the first $n$ letters on the right hand side of Equation~\eqref{eq:2ndquant} to get
\[    \underbrace{abc\cdots abc }_\text{length $n-1$}  a \underbrace{acb\cdots acb }_\text{length $n-1$} \cong \underbrace{acb\cdots cba }_\text{length $n$} \underbrace{abc\cdots abc }_\text{length $n-1$} ,\]
which, after an application of Lemma~\ref{lem:towrite}, gives
\[  \underbrace{bca\cdots abc }_\text{length $n-2$}  a \underbrace{acb\cdots acb }_\text{length $n-1$} \cong \underbrace{cba\cdots cba }_\text{length $n-1$} \underbrace{abc\cdots abc }_\text{length $n-1$} \]
Evaluating the both sides of the above equation shows
\[  b \subset  \left(\bigoplus_{i = 1}^{n-1}  \underbrace{cbacba \cdots }_\text{length $i$} \otimes\underbrace{\cdots abcabc }_\text{length $i-1$}\right) \oplus \mathbf{1}.\]

Thus there exists an $i$ such that 
\[  \dim ( b \to \underbrace{cbacba \cdots }_\text{length $i$} \otimes\underbrace{\cdots abcabc }_\text{length $i-1$}) > 0.\]
Applying Frobenius reciprocity gives 
\[  \dim ( b \underbrace{ cbacba \cdots }_\text{length $i-1$} \to \underbrace{cbacba \cdots }_\text{length $i$})  > 0,\]
which, as $\underbrace{cbacba \cdots }_\text{length $i$}$ is simple, implies that 
\[   b \underbrace{ cbacba \cdots }_\text{length $i-1$} \cong \underbrace{cbacba \cdots }_\text{length $i$}.\]
However this is a contradiction to Lemma~\ref{lem:shift}.
\end{trivlist}

\item \textbf{Case: $n \equiv 2 \pmod 3$}

If $n \equiv 2 \pmod 3$, then the cyclic word $ \underbrace{bcabc\cdots }_\text{length n}$ ends in $c$. Consider the word
\[  \underbrace{bca\cdots cab }_\text{length $n-1$}  c \underbrace{bac\cdots acb }_\text{length $n-1$}.\]
If $n \equiv 2 \pmod 6$ then we can use the relations~\eqref{eq:zippers} to write 
\[  \underbrace{bca\cdots cab }_\text{length $n-1$}  c \underbrace{bac\cdots acb }_\text{length $n-1$} \cong \underbrace{cba\cdots bac }_\text{length $n-1$}  b \underbrace{cab\cdots abc }_\text{length $n-1$} ,\]
and if $n \equiv 5 \pmod 6$ then we can use the relations~\eqref{eq:zippers} to write 
\[ \underbrace{bca\cdots cab }_\text{length $n-1$}  c \underbrace{bac\cdots acb }_\text{length $n-1$} \cong  \underbrace{bac\cdots acb }_\text{length $n-1$} a  \underbrace{bca\cdots cab }_\text{length $n-1$}.\]
Hence we further break the proof up into two more sub-cases.

\begin{trivlist}\leftskip=4.5em
\item \textbf{Case: $n \equiv 2 \pmod 6$}

If $n \equiv 2 \pmod 6$ then we have
\[  \underbrace{bca\cdots cab }_\text{length $n-1$}  c \underbrace{bac\cdots acb }_\text{length $n-1$} \cong \underbrace{cba\cdots bac }_\text{length $n-1$}  b \underbrace{cab\cdots abc }_\text{length $n-1$} ,\]
to which we can apply relation~\eqref{eq:perm} on the first $n$ letters of the left hand side of the above equation to get
\[  \underbrace{abc\cdots bca }_\text{length $n-1$}  b \underbrace{bac\cdots acb }_\text{length $n-1$} \cong \underbrace{cba\cdots bac }_\text{length $n-1$}  b \underbrace{cab\cdots abc }_\text{length $n-1$} .\]
Evaluating the left hand side reveals that
\[ a \subset \underbrace{cba\cdots bac }_\text{length $n-1$}  b \underbrace{cab\cdots abc }_\text{length $n-1$},\]
from which we can use a similar argument as in the $n \equiv 3 \pmod 6$ case to see that
\[      \underbrace{acb\cdots bac }_\text{length $n$} \cong  \underbrace{cba\cdots acb}_\text{length $n$} .\] 
One can now proceed as in the $n \equiv 0 \pmod 6$ or $n \equiv 4 \pmod 6$ case to obtain a contradiction.

\vspace{1em}

\item \textbf{Case: $n \equiv 5 \pmod 6$}

If $n \equiv 5 \pmod 6$ then we have
\[  \underbrace{bca\cdots cab }_\text{length $n-1$}  c \underbrace{bac\cdots acb }_\text{length $n-1$} \cong \underbrace{bac\cdots acb }_\text{length $n-1$} a  \underbrace{bca\cdots cab }_\text{length $n-1$} ,\]
to which we can apply relation~\eqref{eq:perm} on the first $n$ letters of the left hand side of the above equation to get
\[  \underbrace{abc\cdots bca }_\text{length $n-1$}  b \underbrace{bac\cdots acb }_\text{length $n-1$} \cong \underbrace{bac\cdots acb }_\text{length $n-1$} a  \underbrace{bca\cdots cab }_\text{length $n-1$} .\]
Evaluating the left hand side reveals that
\[ a \subset \underbrace{bac\cdots acb }_\text{length $n-1$} a  \underbrace{bca\cdots cab }_\text{length $n-1$},\]
from which we can use a similar argument as in the $n \equiv 3 \pmod 6$ case to see that
\[     a \underbrace{bac\cdots acb }_\text{length $n-1$} \cong  \underbrace{bac\cdots cba}_\text{length $n$} .\] 
However this is a contradiction to Lemma~\ref{lem:shift}.
\end{trivlist}

\end{trivlist}

\end{proof}

\vspace{1em}

\changed{\begin{sublOne}
There are no rank preserving finite \changedd{unitarizable} quotients of $\operatorname{Fib}^{*3}$ such that only $a$ and $b$ commute with each other.
\end{sublOne}}

\begin{proof}
Consider such a quotient. As only $a$ and $b$ commute, we must have the other relations
\[   bcb \cong cbc \quad \text{ and } \quad aca \cong cac.\]

Let us consider the equation~\eqref{eq:perm}
\[\underbrace{abcab\cdots }_\text{length n}   \cong  \underbrace{bcabc\cdots }_\text{length n}.\]

If $n \geq 6$ then we have 
\begin{align*}
\underbrace{bcabca\cdots }_\text{length n} \cong& \underbrace{bcbaca\cdots }_\text{length n} \\
\cong& \underbrace{cbcaca\cdots }_\text{length n} \\
\cong& \underbrace{cbccac\cdots }_\text{length n} \\
\end{align*}
Thus 
\[\underbrace{abcabc\cdots }_\text{length n}   \cong  \underbrace{cbccac\cdots }_\text{length n}.\]
However we can expand the $cc$ on the right hand side to see that $\underbrace{abcabc\cdots }_\text{length n}$ is not simple, which is a contradiction.

If $n = 1,2,\cdots 5$ then one can quickly deduce a contradiction from equation~\eqref{eq:perm}. We work through the $n= 5$ case, and leave the other four cases to the reader, as they are all fairly similar.

If $n= 5$, then we have the relation
\[ abcab   \cong  bcaba.\]
We can write
\[ abcab \cong bacab \cong bcacb,\]
thus
\[  bcacb  \cong  bcaba.\]
Applying Lemma~\ref{lem:towrite} three times gives
\[    cb  \cong  ba.\]
Then, as $a$ and $b$ commute, we get
\[  cb \cong ab.\]
Finally, using Lemma~\ref{lem:towrite} gives $c \cong a$ which is a contradiction.

\vspace{1em}
\end{proof}

\changed{\begin{sublOne}
There are no rank preserving finite \changedd{unitarizable} quotients of $\operatorname{Fib}^{*3}$ such that only $b$ and $c$ commute with each other.
\end{sublOne}}
\begin{proof}

As only $b$ and $c$ commute, we must have the other relations
\[   aba \cong bab \quad \text{ and } \quad aca \cong cac.\]

Let us consider the equation~\eqref{eq:perm}
\[\underbrace{abcab\cdots }_\text{length n}   \cong  \underbrace{bcabc\cdots }_\text{length n}.\]

If $n \geq 7$ then we have 
\begin{align*}
 \underbrace{bcabcab\cdots }_\text{length n} \cong & \underbrace{bcacbab\cdots }_\text{length n} \\
\cong& \underbrace{bacabab\cdots }_\text{length n} \\
\cong & \underbrace{bacaaba\cdots }_\text{length n}.
\end{align*}
Thus 
\[\underbrace{abcabca\cdots }_\text{length n}   \cong  \underbrace{bacaaba\cdots }_\text{length n}.\]
However we can expand the $aa$ on the right hand side to see that $\underbrace{abcabca\cdots }_\text{length n}$ is not simple, which is a contradiction.

Now we work through the cases $n = 1,2,3,4,5,6$ and derive a contradiction in each case. Here the cases $n = 3$ and $n = 6$ are somewhat difficult, and we leave the easier cases to the reader

If $n = 3$ then we have the relation
\[  abc \cong bca.\]
We can left multiply by $b$ and right multiply by $c$ to get
\[  bab \oplus babc \cong cac \oplus bcac.\]
A direct computation shows $\dim( bab \to bab) = 1$, thus $bab$ is simple, and so either $bab \cong cac$ or $bab \subset bcac$. The former case we can rewrite as
\[   aba \cong aca,\]
to which we can apply Lemma~\ref{lem:towrite} to show $b = c$, a contradiction. The later case implies that
\[  \dim( bab \to bcac) > 0,\]
to which we can use Frobenius reciprocity to see
\[    \dim( bbab \to cac )>0\]
which implies either $bab \cong cac$ or $ab \subset cac$, each of which is a contradiction.

If $n = 6$ then we have the relation
\[  abcabc \cong bcabca.\]
We can write
\[  abcabc \cong abcacb \cong abacab \cong babcab,\]
thus 
\[  babcab \cong bcabca.\]
We apply Lemma~\ref{lem:towrite} to see
\[ abcab \cong cabca.\]
Right multiplying by $b$ gives
\[ abcab \oplus abca  \cong cabcab.\]
which contradicts the fact that cyclic words of length $6$ or less are simple.

\end{proof}

\vspace{1em}

\changed{\begin{sublOne}
There are no rank preserving finite \changedd{unitarizable} quotients of $\operatorname{Fib}^{*3}$ such that only $a$ and $c$ commute with each other.
\end{sublOne}}

\begin{proof}
Consider such a quotient. As only $a$ and $c$ commute, we must have the other relations
\[   aba \cong bab \quad \text{ and } \quad bcb \cong cbc.\]

Let us consider the equation~\eqref{eq:perm}
\[\underbrace{abcab\cdots }_\text{length n}   \cong  \underbrace{bcabc\cdots }_\text{length n}.\]

If $n \geq 6$ then we have 
\begin{align*}
 \underbrace{abcabc\cdots }_\text{length n} \cong & \underbrace{abacbc\cdots }_\text{length n} \\
\cong & \underbrace{babcbc\cdots }_\text{length n} \\
\cong & \underbrace{babbcb\cdots }_\text{length n}.
\end{align*}
Thus 
\[\underbrace{babbcb\cdots }_\text{length n}   \cong  \underbrace{bcabca\cdots }_\text{length n}.\]
However we can expand the $bb$ on the left hand side to see that $\underbrace{bcabca\cdots }_\text{length n}$ is not simple, which is a contradiction.

Now we work through the cases $n = 1,2,3,4,5$ and derive a contradiction in each case. Here the case $n = 2$ is difficult, and we leave the easier cases to the reader.

If $n = 2$ then we have the relation
\[   ab \cong bc.\]
Left multiplying by $b$, and right multiplying by $b$ gives
\[  babb \cong bbcb\]
which we expand to see
\[  ba \oplus bab \cong cb \oplus bcb.\]
As $ba \cong cb$ we must have $bab \cong bcb$, to which we can apply Lemma~\ref{lem:towrite} to show $a \cong c$, a contradiction.

\end{proof}

With the technical Lemma~\ref{lem:hard} in hand, we can now make a significant reduction to the problem of classifying finite \changedd{unitarizable} quotients of $\operatorname{Fib}^{*N}$. We show that any such quotient must factor through a certain Deligne product of $\operatorname{Fib}$ and $\mathcal{TT}_3$ categories.

\begin{lemma}
Let $\cC$ be a rank preserving finite \changedd{unitarizable} quotient of $\operatorname{Fib}^{*N}$, then $\cC$ is a \change{rank preserving \changedd{unitarizable}} quotient of 
\[  \operatorname{Fib}^{\boxtimes n} \boxtimes \mathcal{TT}_3^{\boxtimes m}\]
where $n + 2m =N$.
\end{lemma}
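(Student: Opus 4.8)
The plan is to promote Lemma~\ref{lem:hard} from three generators to $N$ generators by an induction on $N$, using the observation that any sub-collection of the $N$ generators of $\cC$ generates a rank preserving finite unitarizable quotient of a free product of fewer copies of $\operatorname{Fib}$. The base cases $N = 1, 2$ are Liu's classification: the only rank preserving finite unitarizable quotients of $\operatorname{Fib}^{*1}$ and $\operatorname{Fib}^{*2}$ are $\operatorname{Fib}$, $\operatorname{Fib}^{\boxtimes 2}$, and $\mathcal{TT}_3$, all of which are of the asserted form. For the inductive step, I would first record the key structural consequence of Lemma~\ref{lem:hard}: for any triple of distinct generators $\tau_i, \tau_j, \tau_k$, the full subcategory they generate is a rank preserving finite unitarizable quotient of $\operatorname{Fib}^{*3}$, so one of the three commutes with the other two. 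Running this over all triples, I would argue that the commutation relation ``$\tau_i$ commutes with $\tau_j$'' organises the generators so that each generator $\tau_i$ either commutes with all the others, or fails to commute with exactly one partner $\tau_{i'}$, and this pairing is symmetric; i.e.\ the ``non-commuting graph'' on the $N$ generators is a disjoint union of isolated vertices and isolated edges (a partial matching). This is the heart of the argument and I expect it to be the main obstacle: one has to rule out, say, a path $\tau_i - \tau_j - \tau_k$ (which Lemma~\ref{lem:hard} directly forbids for that triple) and more generally any configuration where some generator has two non-commuting partners, and then check the matching structure is globally consistent — that no further relations among the commuting generators are forced.

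Granting the matching structure, I would split the generators into the set of isolated vertices, say $n$ of them $\tau_1, \dots, \tau_n$, and the $m$ matched pairs $\{\tau_{n+2j-1}, \tau_{n+2j}\}$ for $j = 1, \dots, m$, with $n + 2m = N$. Each isolated generator commutes with every other generator, and within each pair $\{\tau_{n+2j-1}, \tau_{n+2j}\}$ the two generators do not commute but (by Liu) generate a quotient of $\operatorname{Fib}^{*2}$ in which the non-commuting case forces the subcategory to be (a quotient of) $\mathcal{TT}_3$; moreover any two generators drawn from \emph{different} pairs commute, and an isolated generator commutes with both members of every pair. I would then invoke the universal property of the Deligne product together with the universal property of free products: the collection of commuting sub-quotients $\langle \tau_i \rangle \simeq \operatorname{Fib}$ (for $i \le n$) and $\langle \tau_{n+2j-1}, \tau_{n+2j}\rangle$ (a quotient of $\mathcal{TT}_3$) assemble into a dominant functor $\operatorname{Fib}^{\boxtimes n} \boxtimes \mathcal{TT}_3^{\boxtimes m} \to \cC$, because the only relations witnessed between generators lying in distinct tensor factors are the commutation relations already built into the Deligne product. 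Since the original functor $\operatorname{Fib}^{*N} \to \cC$ factors through $\operatorname{Fib}^{\boxtimes n} \boxtimes \mathcal{TT}_3^{\boxtimes m}$ and is dominant and faithful, so is the induced functor, and rank preservation is inherited from $\cC$; hence $\cC$ is a rank preserving unitarizable quotient of $\operatorname{Fib}^{\boxtimes n} \boxtimes \mathcal{TT}_3^{\boxtimes m}$ with $n + 2m = N$, as claimed.

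The only delicate points beyond the matching argument are (i) making precise that ``a quotient of $\operatorname{Fib}^{*2}$ with non-commuting generators is a quotient of $\mathcal{TT}_3$'' — this is immediate from Liu's list since the remaining option $\operatorname{Fib}^{\boxtimes 2}$ has commuting generators and $\operatorname{Fib}$ is not rank preserving as a quotient of $\operatorname{Fib}^{*2}$ — and (ii) checking that assembling the sub-quotients really does produce a functor out of the Deligne product rather than merely a collection of compatible functors; this follows because the images of the various tensor factors pairwise commute in $\cC$ and $\cC$ is semisimple, so the obvious functor on objects (words) respects the fusion, and it is monoidal because it is induced from the free-product functor by passing to the quotient imposing exactly the commutation relations. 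I expect the matching-graph analysis to consume most of the write-up, and the assembly step to be a short appeal to universal properties.
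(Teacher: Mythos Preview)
Your proposal is correct and is essentially the paper's argument repackaged in graph language: the paper inducts on $N$, at each step peeling off either a single generator that commutes with all others (a $\operatorname{Fib}$ factor) or a non-commuting pair which, by Lemma~\ref{lem:hard}, must commute with every remaining generator (a $\mathcal{TT}_3$ factor) --- this is exactly your partial matching discovered one vertex or edge at a time. You overestimate the difficulty of the matching step: once Lemma~\ref{lem:hard} is available it is a one-line observation (if $\tau_j$ failed to commute with both $\tau_i$ and $\tau_k$ then no member of the triple $\{\tau_i,\tau_j,\tau_k\}$ commutes with the other two), so the write-up is short, not long; and your worry about checking that ``no further relations are forced'' is misplaced, since the lemma only asserts that $\cC$ is a \emph{quotient} of $\operatorname{Fib}^{\boxtimes n}\boxtimes\mathcal{TT}_3^{\boxtimes m}$, not that it equals it.
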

\begin{proof}
We prove by induction on $N$. 

\begin{trivlist}\leftskip=2em
\item $N= 1$:

The only quotient of $\operatorname{Fib}$ is $\operatorname{Fib}$ itself.

\vspace{1em}

\item $N=2$:

By \cite{MR3345186} the only finite \changedd{unitarizable} quotients of $\operatorname{Fib}* \operatorname{Fib}$ are $\operatorname{Fib}$, $\operatorname{Fib} \boxtimes \operatorname{Fib}$ and $ \mathcal{TT}_3$. Only the latter two of these are rank preserving quotients.

\vspace{1em}

\item Inductive step:

Suppose the result holds for $N-1$, and let $\cC$ be a finite rank preserving \changedd{unitarizable} quotient of $\operatorname{Fib}^{*N}$. Consider an arbitrary generator $\tau_1$ of $\cC$. We have two cases to consider, either $\tau_1$ commutes with all the other generators, or there exists another generator $\tau_2$ with which it doesn't commute.

If $\tau_1$ commutes with all the other generators, then $\cC$ must be a \changedd{unitarizable} quotient of $\operatorname{Fib} \boxtimes \mathcal{D}$, where $\mathcal{D}$ is a finite rank preserving \changedd{unitarizable} quotient of $\operatorname{Fib}^{*(N-1)}$. Thus we can use the inductive hypothesis to see that $\cC$ is a rank preserving \changedd{unitarizable} quotient of
\[ \operatorname{Fib}\boxtimes  \operatorname{Fib}^{\boxtimes n} \boxtimes \mathcal{TT}_3^{\boxtimes m}\]
where $n + 2m =N-1$.

If there exists a generator $\tau_2$ for which $\tau_1$ does not commute, then they must generate a $\mathcal{TT}_3$ subcategory of $\cC$. Let $\tau_3$ be a third generator of $\cC$, then the generators $\tau_1, \tau_2$, and $\tau_3$ generate a finite rank preserving \changedd{unitarizable} quotient of $\operatorname{Fib}^{*3}$. By Lemma~\ref{lem:hard} we must have that $\tau_3$ commutes with $\tau_1$ and $\tau_2$. As $\tau_3$ was chosen arbitrarily we have shown that $\tau_1$ and $\tau_2$ commute with all the other $N-2$ generators of $\operatorname{Fib}^{*N}$. Thus $\cC$ is a rank preserving \changedd{unitarizable} quotient of $\mathcal{TT}_3 \boxtimes \mathcal{D}$, where $\mathcal{D}$ is a finite \change{rank preserving \changedd{unitarizable}} quotient of $\operatorname{Fib}^{*(N-2)}$. Therefore we can use the inductive hypothesis to see that $\cC$ is a rank preserving \changedd{unitarizable} quotient of
\[   \mathcal{TT}_3\boxtimes \operatorname{Fib}^{\boxtimes n} \boxtimes \mathcal{TT}_3^{\boxtimes m}\]
where $n + 2m =N-2$.

Irrespective on whether $\tau_1$ commutes with all the other generators or not, we have shown that $\cC$ is a rank preserving \changedd{unitarizable} quotient of 
\[   \operatorname{Fib}^{\boxtimes n} \boxtimes \mathcal{TT}_3^{\boxtimes m}\]
where $n + 2m =N$.
\end{trivlist}
\end{proof}

Recall that we are interested in classifying finite cyclic \changedd{unitarizable} quotients of $\operatorname{Fib}^{*N}$. So far our lemmas have just required the quotient be finite. While we could continue without a symmetry condition, and obtain a classification of all finite \changedd{unitarizable} quotients of $\operatorname{Fib}^{*N}$, such a result is beyond the scope of this paper, and the proof of such a generalised result appears tedious, if straightforward. In the following Remark we discuss which of the categories 
\[   \operatorname{Fib}^{\boxtimes n} \boxtimes \mathcal{TT}_3^{\boxtimes m}\]
are cyclic, and thus could have cyclic quotients.

\begin{rmk}\label{rmk:syms}
Recall a finite cyclic \changedd{unitarizable} quotient of $\operatorname{Fib}^{*N}$ is a finite quotient that has an action of $\Z{N}$ such that the generator of $\Z{N}$ maps
\[  \tau_1 \mapsto \tau_2 \mapsto \cdots \mapsto \tau_N \mapsto \tau_1.\]
Clearly the finite quotient $\operatorname{Fib}^{\boxtimes N}$ is cyclic, with the symmetry coming from the canonical $\Z{N}$ action permuting the factors.

When $N$ is even, we also have the quotient $\mathcal{TT}_3^{\boxtimes \frac{N}{2}}$ is cyclic. In this case the symmetry is more involved. Let $\tau$ and $\phi$ be the Fibonacci generators of $\mathcal{TT}_3$, then the generator of $\Z{N}$ acts on $\mathcal{TT}_3^{\boxtimes \frac{N}{2}}$ by 
\[         \tau_1 \mapsto \tau_2 \mapsto \cdots \tau_{ \frac{N}{2}} \mapsto \phi_1 \mapsto \phi_2 \mapsto \cdots \mapsto \phi_{ \frac{N}{2}} \mapsto \tau_1.\]

As both the categories $\operatorname{Fib}^{\boxtimes N}$  and $\mathcal{TT}_3^{\boxtimes \frac{N}{2}}$ have trivial universal grading group, and have no non-trivial invertible objects, we have from \cite[Theorem 3.4]{MR3354332} along with \cite[Theorem 5.5]{MR2763944} that both of the above $\Z{N}$ actions lift to categorical actions of $\Z{N}$ on the quotients.
\end{rmk}

It is clear by the definition of a cyclic quotient that the categories mentioned in the above Remark are the only possible cyclic \changedd{unitarizable} quotients of $\operatorname{Fib}^{*N}$. Thus we have the following 
\begin{cor}
Let $\cC$ be a rank preserving finite cyclic \changedd{unitarizable} quotient of $\operatorname{Fib}^{*N}$\changed{. T}hen $\cC$ is a cyclic \changedd{unitarizable} quotient of either
\[  \operatorname{Fib}^{\boxtimes N},\]
or, if $N$ is even, 
\[   \mathcal{TT}_3^{\boxtimes \frac{N}{2}}.\]
\end{cor}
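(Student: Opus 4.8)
The plan is to show that the remaining possibilities for a rank preserving finite cyclic \changedd{unitarizable} quotient $\cC$ of $\operatorname{Fib}^{*N}$ — namely the categories $\operatorname{Fib}^{\boxtimes n}\boxtimes\mathcal{TT}_3^{\boxtimes m}$ with $n+2m=N$ that were produced by the previous lemma — collapse to just the two claimed ones once the cyclic symmetry is imposed. First I would recall that by the previous lemma, $\cC$ is a rank preserving \changedd{unitarizable} quotient of $\operatorname{Fib}^{\boxtimes n}\boxtimes\mathcal{TT}_3^{\boxtimes m}$ for some $n+2m=N$; in particular its $N$ Fibonacci generators $\tau_1,\dots,\tau_N$ are (images of) the Fibonacci generators of the component factors, and the commutation relations among them are exactly inherited from that Deligne product: two generators commute unless they are the pair $\{\rho,\mu\}$ (in the notation of the $\mathcal{TT}_3$ definition) inside a common $\mathcal{TT}_3$ factor.

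Next I would exploit the $\Z{N}$ action. Since the quotient is cyclic, there is a monoidal auto-equivalence $\Phi$ with $\Phi(\tau_i)\cong\tau_{i+1}$ (indices mod $N$). Monoidal auto-equivalences preserve whether two objects commute, so the "non-commuting partner" relation on $\{\tau_1,\dots,\tau_N\}$ is $\Z{N}$-equivariant. By the structure just recalled, this relation is either empty (the $\operatorname{Fib}^{\boxtimes N}$ case, i.e. $m=0$) or it is a perfect matching on the $N$ generators pairing each $\rho$-type generator with its $\mu$-type partner in the same $\mathcal{TT}_3$ factor (when $m>0$, and then necessarily $n=0$, because a generator that commutes with all the others would, by Lemma~\ref{lem:hard} applied to triples, sit in its own $\operatorname{Fib}$ factor — but I want to rule $n>0, m>0$ out). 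The key combinatorial point: a $\Z{N}$-equivariant perfect matching on a cyclic set of size $N$ must pair $\tau_i$ with $\tau_{i+N/2}$ (so $N$ is even), because the generator of $\Z{N}$ sends the edge $\{\tau_i,\tau_j\}$ to $\{\tau_{i+1},\tau_{j+1}\}$, and the only $\Z{N}$-orbit of such edges that is itself a perfect matching is the "antipodal" one. This forces $N$ even and identifies the $m=N/2$ factors of $\mathcal{TT}_3$ concretely, with the $\Z{N}$-action permuting them as in Remark~\ref{rmk:syms}; mixed cases $n>0$, $m>0$ are impossible since a perfect matching has no isolated vertices.

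Then I would handle the "only the trivial quotients" step: $\cC$ is a \changedd{unitarizable} quotient of $\operatorname{Fib}^{\boxtimes N}$ or of $\mathcal{TT}_3^{\boxtimes N/2}$, and I must show the quotient functor is an equivalence. For this I would argue that the $N$ Fibonacci generators already $\otimes$-generate the whole Deligne product (true for $\operatorname{Fib}^{\boxtimes N}$ by definition, and true for $\mathcal{TT}_3^{\boxtimes N/2}$ since $\rho,\mu$ generate each $\mathcal{TT}_3$ factor, as noted after the $\mathcal{TT}_3$ definition), so the quotient is dominant; it remains to see that no further identifications occur, i.e. all simple objects of the Deligne product remain non-isomorphic and simple in $\cC$. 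For $\operatorname{Fib}^{\boxtimes N}$ one can check directly that the $2^N$ words in distinct commuting generators have pairwise distinct dimensions-vectors, or invoke that $\operatorname{Fib}$ has no proper quotients (being rank $2$ with $\tau$ non-invertible) together with the fact that a quotient of a Deligne product of categories with no non-trivial quotients and no nontrivial invertibles is again such a product — alternatively, repeatedly apply Lemma~\ref{lem:towrite}-style cancellation to see that any relation $w\cong w'$ between reduced words in $\operatorname{Fib}^{\boxtimes N}$ forces $w=w'$. For $\mathcal{TT}_3^{\boxtimes N/2}$ the same reasoning applies factorwise using that $\mathcal{TT}_3$ has only the trivial finite quotient among its rank-preserving quotients (from Liu's classification, the only finite \changedd{unitarizable} quotients of $\operatorname{Fib}*\operatorname{Fib}$ are $\operatorname{Fib}$, $\operatorname{Fib}^{\boxtimes 2}$, $\mathcal{TT}_3$, and $\mathcal{TT}_3$ has no proper quotient), and that a quotient of a Deligne product restricts to a quotient on each tensor factor. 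Assembling these gives $\cC\simeq\operatorname{Fib}^{\boxtimes N}$ or $\cC\simeq\mathcal{TT}_3^{\boxtimes N/2}$.

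The main obstacle I expect is making the "$\Z{N}$-equivariant matching is antipodal" and the "no further identifications" steps fully rigorous at the categorical (rather than merely fusion-ring) level: one has to be careful that a monoidal auto-equivalence really does preserve the commutation pattern and the decomposition into $\operatorname{Fib}$ and $\mathcal{TT}_3$ blocks, and that "quotient of a Deligne product" genuinely restricts to quotients of the factors — the latter is where one leans on the structure of Deligne products of categories with no nontrivial invertibles and trivial universal grading, as in Remark~\ref{rmk:syms}. Everything else is bookkeeping with the fusion rules and repeated application of Lemma~\ref{lem:towrite}.
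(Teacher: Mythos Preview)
You have misread the statement: the corollary only asserts that $\cC$ is a \emph{quotient} of $\operatorname{Fib}^{\boxtimes N}$ or $\mathcal{TT}_3^{\boxtimes N/2}$, not that it is equal to one of them. Your entire part 3 (``only the trivial quotients'') is not needed here; that is the content of the two subsequent lemmas (Lemma~\ref{lem:fibbox} and Lemma~\ref{lem:TTbox}), which together with this corollary give Theorem~\ref{thm:fin}. Once part 3 is removed, your parts 1--2 are exactly the paper's argument. The paper states it in one line (``It is clear by the definition of a cyclic quotient that the categories mentioned in the above Remark are the only possible cyclic unitarizable quotients''), and your $\Z{N}$-equivariant perfect matching argument is precisely what makes that line rigorous: the non-commuting relation on the generators is preserved by the cyclic auto-equivalence, and the only $\Z{N}$-invariant graphs on a cyclic $N$-set that are disjoint unions of edges and isolated vertices are the empty graph and (for $N$ even) the antipodal matching.

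Since you also sketched part 3, a brief comparison: your proposed route for ruling out further quotients relies on the principle that ``a quotient of a Deligne product restricts to a quotient on each tensor factor'', together with $\operatorname{Fib}$ and $\mathcal{TT}_3$ having no proper rank-preserving quotients. That principle is not obvious and is not established in the paper; a dominant functor out of $\cC_1\boxtimes\cC_2$ need not factor as a product of dominant functors, and you would have to supply a genuine argument (e.g.\ via commutative algebras in the center). The paper sidesteps this entirely: it directly verifies, by an inductive Hom-space computation using Lemma~\ref{lem:towrite}, that every ``source-simple'' word in the generators remains simple and that distinct such words remain non-isomorphic in any rank-preserving quotient, then counts global dimension. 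That direct approach is longer but self-contained; yours would be cleaner if the restriction-to-factors principle were available, but as written it is a gap.
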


Finally we need to determine if there are any further \changedd{unitarizable} quotients of $\operatorname{Fib}^{\boxtimes N}$ or $\mathcal{TT}_3^{\boxtimes N}$. This is equivalent to classifying central commutative algebras in these categories, a hard problem in general \cite{MR2909758}. However we are able to find arguments specific to the two above categories that bypass having to classify such algebras. The idea behind these arguments is to simply compute all the simple objects in such a quotient. We find that the simple objects of $\operatorname{Fib}^{\boxtimes N}$ or $\mathcal{TT}_3^{\boxtimes N}$ remain simple and distinct in any quotient. Thus only the trivial quotients exist. This argument was inspired by a similar technique used in \cite{MR3536926}.

We begin with the $\operatorname{Fib}^{\boxtimes N}$ case.
\begin{lemma}\label{lem:fibbox}
Let $\cC$ be a rank preserving \changedd{unitarizable} quotient of $\operatorname{Fib}^{\boxtimes N}$, then $\cC$ is monoidally equivalent to $\operatorname{Fib}^{\boxtimes N}$.
\end{lemma}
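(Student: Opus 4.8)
The plan is to show that the simple objects of $\operatorname{Fib}^{\boxtimes N}$ — the $2^N$ words consisting of unordered non-repeating combinations of the generators $\{\tau_i\}$ — remain simple and pairwise non-isomorphic in any rank preserving quotient $\cC$, from which it follows that the quotient functor is fully faithful on simples, hence an equivalence. The central tool is Lemma~\ref{lem:towrite}, which lets us cancel a Fibonacci object from both sides of an isomorphism.

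First I would argue that each word $\tau_{i_1}\tau_{i_2}\cdots\tau_{i_k}$ (with the $i_j$ distinct) is simple in $\cC$. This can be done by induction on $k$, exactly as in the $\operatorname{Fib}^{*3}$ case: writing $W = \tau_{i_1} W'$ with $W' = \tau_{i_2}\cdots\tau_{i_k}$ simple by induction, one computes $\dim(W \to W) = \dim(\tau_{i_1} W' \to \tau_{i_1} W')$; left-multiplying by $\tau_{i_1}$ and using $\tau_{i_1}\otimes\tau_{i_1} \cong \mathbf{1}\oplus\tau_{i_1}$ reduces this to a sum of dimensions of Hom-spaces between shorter words, where the inductive hypothesis (together with the fact that, since the $\tau_i$ pairwise commute in $\operatorname{Fib}^{\boxtimes N}$, we may freely reorder and the relevant non-isomorphisms amount to ``different letter-supports give different simples,'' handled inductively) forces $\dim(W\to W) = 1$.

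Next, to see the simples stay distinct, suppose $W_1 \cong W_2$ for two distinct subsets of generators. Cancelling (via Lemma~\ref{lem:towrite}) all the generators common to both words, we reduce to the case where $W_1$ and $W_2$ involve disjoint sets of generators, at least one of them nonempty; WLOG $\tau_i$ appears in $W_1$ but not $W_2$. Then $\tau_i \otimes W_1 \cong W_1 \oplus (\text{stuff})$ while $\tau_i$ cannot be cancelled against $W_2$, and a dimension/support count of $\dim(\tau_i \to W_2 \otimes \overline{W_1})$ via Frobenius reciprocity yields a contradiction with simplicity or rank preservation — the key point being that the generators still pairwise commute in the quotient (this is inherited), so all words decompose into the ``standard'' subset-words, and two standard words with different supports cannot coincide.

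The main obstacle will be bookkeeping the Frobenius-reciprocity/support argument cleanly: one must make precise that in $\cC$ every word still reduces (using $\tau_i^{\otimes 2}\cong\mathbf 1\oplus\tau_i$ and commutativity) to a direct sum of the $2^N$ standard subset-words, and that an isomorphism between two standard words is impossible unless they have the same support. I expect the cleanest route is to first establish that $\cC$ has at least $2^N$ simple objects by exhibiting the standard words as simple and distinct via the cancellation lemma, then invoke $\FPdim(\cC) \le \FPdim(\operatorname{Fib}^{\boxtimes N})$ (since $\cC$ is a quotient, every simple has dimension equal to its dimension upstairs, being a word in objects of dimension $\frac{1+\sqrt5}{2}$) to conclude that these are all the simples and the functor is an equivalence.
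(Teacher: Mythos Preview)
Your proposal is correct and follows essentially the same route as the paper: show that the $2^N$ ``source-simple'' words (unordered non-repeating products of the $\tau_i$) remain simple and pairwise non-isomorphic in $\cC$, then conclude by a global-dimension count that the quotient functor is an equivalence. Your simplicity induction via Frobenius reciprocity and Lemma~\ref{lem:towrite} matches the paper's (the paper moves a letter from each end simultaneously, you move one; both work). For the distinctness step in the disjoint-support case, the paper's version is slightly cleaner than your sketch: rather than looking at $\dim(\tau_i \to W_2\otimes\overline{W_1})$, observe directly that $W_1\cong W_2$ gives $\dim(\mathbf 1 \to W_1\otimes\overline{W_2})\geq 1$, while $W_1\otimes\overline{W_2}=W_1W_2$ is itself a source-simple word (the supports are disjoint), hence simple of dimension $>1$ --- an immediate contradiction. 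Your version reaches the same contradiction once fleshed out, since $W_1W_2$ being simple forces $\tau_i\cong W_1W_2$, which fails on dimensions.
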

\begin{proof}
As $\cC$ is a rank preserving finite \changedd{unitarizable} quotient of $\operatorname{Fib}^{\boxtimes N}$, it contains the commuting distinct simple objects $\tau_1, \tau_2, \cdots, \tau_N$ each individually satisfying the Fibonacci fusion rule. 

Consider the words of $\cC$ that contain at most one of each of the objects $\tau_1, \tau_2, \cdots, \tau_N$. We will call such words \textit{source-simple} words. We say two source-simple words are identical if they share the same letter set.

To prove the claim of this lemma, we will show that all source-simple words are simple, and two source-simple words are distinct if they are non-identical. With this result, we count the global dimension of $\cC$ as at least $\left( \frac{5 + \sqrt{5}}{2}\right)^N$, hence $\cC$ must be the trivial quotient of $\operatorname{Fib}^{\boxtimes N}$.

We begin by showing all source-simple words are simple.

We induct on the length of the word.

As the objects $\tau_1, \tau_2, \cdots, \tau_N$ are simple, we have that the result holds for source-simple words of length $1$.

Suppose the result holds for source-simple words of length $k-1$ for $k \leq N$. That is, suppose all source-simple words of length $k-1$ or less are simple. Consider a source-simple word of length $k$. As the labels for the $N$ generators of $\operatorname{Fib}^{\boxtimes N}$ were chosen arbitrarily we can assume that this words begins with $\tau_1$ and ends with $\tau_2$. We compute the endomorphisms of this word.
\begin{align*}
\dim ( \underbrace{\tau_1 \cdots  \tau_2}_\text{length $k$} \to \underbrace{\tau_1 \cdots  \tau_2}_\text{length $k$}) = & \dim ( \underbrace{\tau_1 \tau_1 \cdots  }_\text{length $k$}   \to \underbrace{ \cdots  \tau_2 \tau_2}_\text{length $k$}) \\
									=&  \dim ( \underbrace{\tau_1 \cdots }_\text{length $k-1$}\oplus \underbrace{\cdots}_\text{length $k-2$}   \to \underbrace{ \cdots  \tau_2}_\text{length $k-1$} \oplus \underbrace{\cdots}_\text{length $k-2$}) \\
									=&  \dim ( \underbrace{\tau_1 \cdots }_\text{length $k-1$}  \to \underbrace{ \cdots  \tau_2}_\text{length $k-1$} ) + \dim ( \underbrace{ \cdots }_\text{length $k-2$}  \to \underbrace{ \cdots  \tau_2}_\text{length $k-1$} ) \\
									&+ \dim ( \underbrace{\tau_1 \cdots }_\text{length $k-1$}  \to \underbrace{ \cdots}_\text{length $k-2$} ) + \dim ( \underbrace{ \cdots }_\text{length $k-2$}  \to \underbrace{ \cdots }_\text{length $k-2$} )  \\
									=&1 +  \dim ( \underbrace{\tau_1 \cdots }_\text{length $k-1$}  \to \underbrace{ \cdots  \tau_2}_\text{length $k-1$} ).
\end{align*}
Here the last step uses the inductive hypothesis. 

As both $\underbrace{\tau_1 \cdots }_\text{length $k-1$}$ and $\underbrace{ \cdots  \tau_2}_\text{length $k-1$}$ are source-simple words, we have that they are simple by the inductive hypothesis. Thus
\[ \dim ( \underbrace{\tau_1 \cdots }_\text{length $k-1$}  \to \underbrace{ \cdots  \tau_2}_\text{length $k-1$} )\]
is either equal to $0$ or $1$.

If 
\[ \dim ( \underbrace{\tau_1 \cdots }_\text{length $k-1$}  \to \underbrace{ \cdots  \tau_2}_\text{length $k-1$} ) =1 \]
then we have the equality $\underbrace{\tau_1 \cdots }_\text{length $k-1$} \cong \underbrace{ \cdots  \tau_2}_\text{length $k-1$}$ as both words are simple. Using the commutativity of the $\tau_i$'s we can write
\[  \underbrace{\tau_1 \cdots }_\text{length $k-1$} \cong \underbrace{\tau_2 \cdots  }_\text{length $k-1$}\]
to which we can apply Lemma~\ref{lem:towrite} $k-1$ times (as the rightmost $k-1$ letters of both words are identical up to order) to see $\tau_1 \cong \tau_2$. Thus we have a contradiction, so 
\[ \dim ( \underbrace{\tau_1 \cdots }_\text{length $k-1$}  \to \underbrace{ \cdots  \tau_2}_\text{length $k-1$} ) =0. \]
Therefore the endomorphism space of $ \underbrace{\tau_1 \cdots  \tau_2}_\text{length $k$}$ is 1 dimensional, and hence is simple. Thus, all source-simple words of length $k$ are simple, completing the \change{induction}.

\vspace{2em}

We now prove that all non-identical source-simple words are distinct. Again we induct on the length of the word.

As the objects $\tau_1, \tau_2, \cdots, \tau_N$ are distinct, we have that the result holds for source-simple words of length $1$.

Suppose the result holds for source-simple words of length $k-1$ for $k \leq N$. That is, suppose all non-identical source-simple words of length $k-1$ are distinct. Consider two source-simple words, and suppose they are equal. We consider two cases, either these two words share a letter, or they do not share a letter.

If the two words do not share a letter then consider the word obtained by concatenating these two words together. As the two component words share no letters, we have the concatenated word is source-simple, and thus simple by the earlier result. However as the two component words are equal, we can use Frobenius reciprocity to show that there is a non-trivial morphism from the tensor unit to the concatenated word, a contradiction. Thus this case can not occur.

Now suppose the two words share a letter, say $\tau_1$. We can use the commutativity of the $\tau_i$'s to bring the shared $\tau_1$ to the left hand side of each word. Then we can apply Lemma~\ref{lem:towrite} to see that the two words of length $k-1$ obtained by removing $\tau_1$ are equal. The inductive hypothesis then gives that these two words of length $k-1$ are identical. As the two words of length $k$ are obtained from this length $k-1$ word by appending a $\tau_1$, we thus have these two source-simple words of length $k$ are identical, completing the induction.
\end{proof}

Next we consider the $\mathcal{TT}_3^{\boxtimes N}$ case. The arguments here are slight alterations of the $\operatorname{Fib}^{\boxtimes N}$ case.

\begin{lemma}\label{lem:TTbox}
Let $\cC$ be a rank preserving \changedd{unitarizable} quotient of $\mathcal{TT}_3^{\boxtimes N}$. Then $\cC$ is monoidally equivalent to $\mathcal{TT}_3^{\boxtimes N}$.
\end{lemma}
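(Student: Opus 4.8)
The plan is to repeat the proof of Lemma~\ref{lem:fibbox}, replacing the two-element alphabet $\{\mathbf{1},\tau_i\}$ of a $\operatorname{Fib}$ factor by the six-element set of simple objects of a $\mathcal{TT}_3$ factor. Recall that $\mathcal{TT}_3$ is $\otimes$-generated by its two Fibonacci objects $\rho,\mu$, that every alternating word in $\rho,\mu$ of length at most $3$ is simple, and that the only coincidence among such words is $\rho\mu\rho\cong\mu\rho\mu$; consequently the six simple objects of $\mathcal{TT}_3$ are precisely the alternating words $\mathbf{1}$, $\rho$, $\mu$, $\rho\mu$, $\mu\rho$, $\rho\mu\rho$. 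Thus $\mathcal{TT}_3^{\boxtimes N}$ is $\otimes$-generated by the $2N$ Fibonacci objects $\rho_1,\mu_1,\dots,\rho_N,\mu_N$, where $\rho_i,\mu_i$ satisfy the $\mathcal{TT}_3$ relations and objects of distinct factors commute. I will call a word \emph{source-simple} if, up to the commutation relations, it is a product $w_1 w_2\cdots w_N$ with each $w_i$ one of the six alternating words in $\rho_i,\mu_i$ above, and call two source-simple words \emph{identical} if $w_i$ and $w_i'$ agree in every factor.

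First I would check that each $\mathcal{TT}_3$ factor embeds faithfully into $\cC$. The image of the composite $\mathcal{TT}_3\hookrightarrow\mathcal{TT}_3^{\boxtimes N}\to\cC$ is a fusion subcategory of $\cC$ and a dominant quotient of $\mathcal{TT}_3$; composing further with $\operatorname{Fib}*\operatorname{Fib}\to\mathcal{TT}_3$ and invoking Liu's classification \cite{MR3345186} of finite \changedd{unitarizable} quotients of $\operatorname{Fib}*\operatorname{Fib}$, this image is monoidally equivalent to $\operatorname{Fib}$, $\operatorname{Fib}^{\boxtimes 2}$, or $\mathcal{TT}_3$. Since $\cC$ is rank preserving we have $\rho_i\ncong\mu_i$, which rules out $\operatorname{Fib}$; and $\operatorname{Fib}^{\boxtimes 2}$ is impossible because a short computation shows $\FPdim(\mathcal{TT}_3)/\FPdim(\operatorname{Fib}^{\boxtimes 2})$ is not an algebraic integer, so there is no dominant tensor functor $\mathcal{TT}_3\to\operatorname{Fib}^{\boxtimes 2}$. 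Hence each factor survives, and in $\cC$ the six alternating words of each factor are simple, pairwise non-isomorphic, and multiply according to the $\mathcal{TT}_3$ fusion table.

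Now I would run the two inductions of Lemma~\ref{lem:fibbox} on the total length of the word, proving simplicity and distinctness simultaneously. For simplicity: given a source-simple word of length $k$, reorder the factors so that some factor $i$ with $w_i\neq\mathbf{1}$ is written first, and, by the symmetry between $\rho_i$ and $\mu_i$, assume the leftmost letter of the whole word is $\rho_i$. Writing the word as $\rho_i\otimes w'$ with $w'$ a source-simple word of length $k-1$, which is simple by induction, one computes as in Lemma~\ref{lem:fibbox} that $\dim(\rho_i w'\to\rho_i w')=\dim(w'\to w')+\dim(w'\to\rho_i w')=1+\dim(W_1\to W_2)$, where $W_1$ and $W_2$ are source-simple words of length less than $k$ produced by further peeling and Frobenius reciprocity; the cross term vanishes because $W_1$ and $W_2$ are non-identical, hence non-isomorphic by the distinctness statement at shorter lengths, or by a direct cancellation via Lemma~\ref{lem:towrite}. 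For distinctness: if two non-identical source-simple words were isomorphic, then either they share no generator, in which case their concatenation is source-simple hence simple yet receives a nonzero morphism from $\mathbf{1}$ by Frobenius reciprocity, a contradiction, or they share a generator $g$, which (being Fibonacci) can be brought to the left of both words by the commutation relations and cancelled via Lemma~\ref{lem:towrite}, reducing the length and contradicting the inductive hypothesis. Finally, the $6^N$ non-identical source-simple words are exactly the images of the $6^N$ simple objects of $\mathcal{TT}_3^{\boxtimes N}$, so the quotient functor is injective, hence (being dominant) bijective, on isomorphism classes of simple objects, and is therefore an equivalence $\cC\simeq\mathcal{TT}_3^{\boxtimes N}$; equivalently $\FPdim(\cC)\geq\prod_{i=1}^{N}\FPdim(\mathcal{TT}_3)=\FPdim(\mathcal{TT}_3^{\boxtimes N})$, forcing the quotient to be trivial.

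The main obstacle, and the reason this cannot be deduced verbatim from Lemma~\ref{lem:fibbox}, is the non-commutativity of $\rho_i$ and $\mu_i$ inside a single factor. In the $\operatorname{Fib}^{\boxtimes N}$ case all generators commute, so the ``peel a letter, slide it to the far end, and observe that its dual cannot occur in what remains'' manipulation is immediate; here one may only peel the genuine leftmost letter of the word, must use the factor reordering to bring the factor of interest to the front, must track that a generator of factor $i$ occurs only inside the factor-$i$ block, and must separately dispatch the finitely many configurations internal to one factor (the words of length at most $3$, where $\rho\mu\rho\cong\mu\rho\mu$) --- it is precisely here that the faithfulness of each factor established in the preliminary step is used. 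Modulo this extra bookkeeping, the argument is the one in Lemma~\ref{lem:fibbox} with a six-letter alphabet in place of a two-letter one.
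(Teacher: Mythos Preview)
Your overall strategy---define source-simple words, prove by induction on length that they are simple and pairwise non-isomorphic, then count to conclude the quotient is trivial---is exactly the paper's. Your preliminary step, pinning down each factor as a genuine copy of $\mathcal{TT}_3$ via Liu's classification together with an $\FPdim$ obstruction, is a clean way to package facts such as $\rho_i\mu_i\ncong\mu_i\rho_i$ that the paper instead re-derives in-line; this is a mild improvement in presentation, not a different method.

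There is, however, a real gap in your distinctness argument. You write that if two isomorphic source-simple words share a generator $g$, then $g$ ``can be brought to the left of both words by the commutation relations and cancelled via Lemma~\ref{lem:towrite}.'' This fails precisely in the configuration you flag at the end: if the factor-$i$ block of one word is $\rho_i\mu_i$ and of the other is $\mu_i\rho_i$, the two words share both $\rho_i$ and $\mu_i$, yet \emph{neither} letter can be simultaneously commuted to the same end of both words, so no cancellation is available. Your appeal to ``faithfulness of each factor'' does not close this: faithfulness tells you $\rho_i\mu_i\ncong\mu_i\rho_i$ inside the single factor, but says nothing about whether $\rho_i\mu_i\cdot v\cong\mu_i\rho_i\cdot v'$ can hold once $v,v'$ range over the other factors. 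The paper treats exactly this ``bad case'' with an explicit Frobenius reciprocity computation: starting from $\phi_1\tau_1\cdot v\cong\tau_1\phi_1\cdot v'$, one rewrites
\[
1=\dim(\phi_1\tau_1 v\to\tau_1\phi_1 v')=\dim(\phi_1\tau_1\phi_1\tau_1 v\to v'),
\]
applies the relation $\phi_1\tau_1\phi_1\cong\tau_1\phi_1\tau_1$, expands the resulting $\phi_1\phi_1$, and uses the already-established simplicity and shorter-length distinctness to force this dimension to be $0$. You have correctly located the only nontrivial step in the lemma, but you have not supplied the argument that disposes of it.
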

\begin{proof}
As $\cC$ is a rank preserving finite quotient of $\mathcal{TT}_3^{\boxtimes N}$, it contains the distinct simple objects $\tau_1, \phi_1,  \tau_2,\phi_2  \cdots, \tau_N, \phi_N$ each individually satisfying the $\operatorname{Fib}$ fusion rules, and 
\[   \tau_i \phi_i \tau_i \cong \phi_i\tau_i\phi_i, \qquad       \tau_i \tau_j  \cong \tau_j \tau_i,    \qquad \phi_i \tau_j  \cong \tau_j \phi_i,   \qquad   \phi_i \phi_j  \cong \phi_j \phi_i,\]
for $i \neq j$.

Consider the words of $\cC$ that for each $i$, contain either at most one $\tau_i$ and at most one $\phi_i$, or at most one of the words $\tau_i\phi_i\tau_i$. We will call such words \textit{source-simple} words. We say two source-simple words are identical if for each $i$, the two subwords consisting of $\tau_i$'s and $\phi_i$'s are equal in $\mathcal{TT}_3$. 

To prove the claim of this lemma, we will show that all source-simple words are simple, and two source-simple words are distinct if they are non-identical. With this result, we count the global dimension of $\cC$ as at least $\left(20 + 8\sqrt{5}\right)^N$, hence $\cC$ must be the trivial quotient of $\mathcal{TT}_3^{\boxtimes N}$.

As in the $\operatorname{Fib}^{\boxtimes N}$ case we begin by proving first proving that all source-simple words are simple. We proceed by induction.

As the objects $\tau_1, \phi_1,  \tau_2,\phi_2,  \cdots, \tau_N, \phi_N$ are all simple, we have that the result holds for source-simple words of length $1$.

Suppose the result holds for words of length $k-1$ for $k < N$. That is, suppose all source-simple words of length $k-1$ are simple. Consider a source-simple word of length $k$. We can assume that this word begins with the letter $\tau_1$. We have four cases to consider, either the word ends in $\tau_1$, $\tau_2$, $\phi_1$, or $\phi_2$.

\begin{trivlist}\leftskip=2em
\item \textbf{Case: The source-simple word ends in $\tau_1$}

If the source-simple word begins and ends in $\tau_1$ then the word must also contain a $\phi_1$ and no additional $\tau_1$'s, by the definition of source-simple. Using the inductive hypothesis we compute the endomorphism space of this word
\[  \dim ( \underbrace{\tau_1 \cdots  \tau_1}_\text{length $k$} \to \underbrace{\tau_1 \cdots  \tau_1}_\text{length $k$}) = 1 + \dim ( \underbrace{\tau_1 \cdots }_\text{length $k-1$} \to \underbrace{ \cdots \tau_1 }_\text{length $k-1$}).\]

As both $\underbrace{\tau_1 \cdots }_\text{length $k-1$}$ and $ \underbrace{ \cdots \tau_1 }_\text{length $k-1$}$ are source-simple, they are simple by the inductive hypothesis. Thus 
\[ \dim ( \underbrace{\tau_1 \cdots }_\text{length $k-1$} \to \underbrace{ \cdots \tau_1 }_\text{length $k-1$})\]
is equal to either $1$ or $0$.

If 
\[ \dim ( \underbrace{\tau_1 \cdots }_\text{length $k-1$} \to \underbrace{ \cdots \tau_1 }_\text{length $k-1$}) = 1\]
then 
\[ \underbrace{\tau_1 \cdots }_\text{length $k-1$}\cong \underbrace{ \cdots \tau_1 }_\text{length $k-1$}.\]
As the parent length $k$ word only contained two $\tau_1$'s and a single $\phi_1$ we have that each of these words contains a $\tau_1 \phi_1$ and $\phi_1 \tau_1$ subword respectively, and no other $\tau_1$'s or $\phi_1$'s. Thus we can commute to the right to get
\[ \underbrace{\cdots \tau_1 \phi_1}_\text{length $k-1$}\cong \underbrace{ \cdots \phi_1 \tau_1 }_\text{length $k-1$}.\]
The leftmost $k-2$ letters of both these words are the same, so we can apply Lemma~\ref{lem:towrite} $k-2$ times to get $\tau_1\phi_1 \cong \phi_1 \tau_1$, a contradiction. Thus
\[ \dim ( \underbrace{\tau_1 \cdots }_\text{length $k-1$} \to \underbrace{ \cdots \tau_1 }_\text{length $k-1$}) = 0\] 
and so $\underbrace{\tau_1 \cdots  \tau_1}_\text{length $k$}$ is simple.

\vspace{1em}

\item \textbf{Case: The source-simple word ends in $\tau_2$}

Suppose we have a source-simple word that begins with $\tau_1$ and ends in $\tau_2$. Using the inductive hypothesis we compute the endomorphism space of this word
\[  \dim ( \underbrace{\tau_1 \cdots  \tau_2}_\text{length $k$} \to \underbrace{\tau_1 \cdots  \tau_2}_\text{length $k$}) = 1 + \dim ( \underbrace{\tau_1 \cdots }_\text{length $k-1$} \to \underbrace{ \cdots \tau_2 }_\text{length $k-1$}).\]

As both $\underbrace{\tau_1 \cdots }_\text{length $k-1$}$ and $ \underbrace{ \cdots \tau_2 }_\text{length $k-1$}$ are source-simple, they are simple. Thus 
\[ \dim ( \underbrace{\tau_1 \cdots }_\text{length $k-1$} \to \underbrace{ \cdots \tau_2 }_\text{length $k-1$})\]
is equal to either $1$ or $0$.

If 
\[ \dim ( \underbrace{\tau_1 \cdots }_\text{length $k-1$} \to \underbrace{ \cdots \tau_2 }_\text{length $k-1$}) = 1\]
then 
\[ \underbrace{\tau_1 \cdots }_\text{length $k-1$}\cong \underbrace{ \cdots \tau_2 }_\text{length $k-1$}.\]

We have three further sub-cases to consider, either the source-simple word of length $k$ contains only a single $\tau_1$, and no additional $\tau_1$'s or $\phi_1$'s, or the source-simple word of length $k$ contains a $\tau_1 \phi_1$, and no additional $\tau_1$'s or $\phi_1$'s,
 or the source-simple word of length $k$ contains a $\tau_1 \phi_1 \tau_1$, and no additional $\tau_1$'s or $\phi_1$'s.
 
 \begin{trivlist}\leftskip=4em
\item \textbf{Sub-case: The source-simple word of length $k$ contains only a single $\tau_1$, and no additional $\tau_1$'s or $\phi_1$'s}

In this case we can commute the $\tau_1$ to the far right of $\underbrace{\tau_1 \cdots }_\text{length $k-1$}$, and cancel the shared leftmost $k-2$ letters in the equation
\[ \underbrace{\cdots \tau_1 }_\text{length $k-1$}\cong \underbrace{ \cdots \tau_2 }_\text{length $k-1$},\]
to obtain $\tau_1 \cong \tau_2$, a contradiction.

\vspace{1em}

\item \textbf{Sub-case: The source-simple word of length $k$ contains a $\tau_1 \phi_1$, and no additional $\tau_1$'s or $\phi_1$'s}

In this case we can commute the $\phi_1$ to the far right of the word $\underbrace{ \cdots \tau_2 }_\text{length $k-1$}$, and we can commute the $\tau_1$ and $\phi_1$ to the far right of the word $ \underbrace{\tau_1 \cdots }_\text{length $k-1$}$ to get the equality
\[ \underbrace{ \cdots \tau_1 \phi_1 }_\text{length $k-1$}\cong \underbrace{ \cdots \tau_2 \phi_1 }_\text{length $k-1$}.\]
Cancelling the shared leftmost $k-3$ letters in the above equation, and the shared rightmost letter gives $\tau_1  \cong \tau_2 $, a contradiction.

\vspace{1em}

\item \textbf{Sub-case: The source-simple word of length $k$  contains a $\tau_1 \phi_1 \tau_1$, and no additional $\tau_1$'s or $\phi_1$'s}

In this case we can commute the $\phi_1 \tau_1$ to the far right of the word $\underbrace{ \cdots \tau_2 }_\text{length $k-1$}$, and we can commute the $\tau_1 \phi_1 \tau_1$ to the far right of the word $ \underbrace{\tau_1 \cdots }_\text{length $k-1$}$ to get the equality
\[ \underbrace{ \cdots \tau_1 \phi_1 \tau_1 }_\text{length $k-1$}\cong \underbrace{ \cdots \tau_2 \phi_1 \tau_1 }_\text{length $k-1$}.\]
Cancelling the shared leftmost $k-4$ letters in the above equation, and the two shared rightmost letter gives $\tau_1  \cong \tau_2 $, a contradiction.

\end{trivlist}

In all cases we get a contradiction, thus
\[ \dim ( \underbrace{\tau_1 \cdots }_\text{length $k-1$} \to \underbrace{ \cdots \tau_2 }_\text{length $k-1$}) = 0.\]
Hence $\underbrace{\tau_1 \cdots  \tau_2}_\text{length $k$}$ is simple.

\vspace{1em}

\item \textbf{Case: The source-simple word ends in $\phi_1$}

If the source-simple word begins with $\tau_1$ and ends in $\phi_1$ then the word must contain no additional $\tau_1$'s or $\phi_1$'s, by the definition of source-simple. Using the inductive hypothesis we compute the endomorphism space of this word
\[  \dim ( \underbrace{\tau_1 \cdots  \phi_1}_\text{length $k$} \to \underbrace{\tau_1 \cdots  \phi_1}_\text{length $k$}) = 1 + \dim ( \underbrace{\tau_1 \cdots }_\text{length $k-1$} \to \underbrace{ \cdots \phi_1 }_\text{length $k-1$}).\]

As both $\underbrace{\tau_1 \cdots }_\text{length $k-1$}$ and $ \underbrace{ \cdots \phi_1 }_\text{length $k-1$})$ are source-simple, they are simple. Thus 
\[ \dim ( \underbrace{\tau_1 \cdots }_\text{length $k-1$} \to \underbrace{ \cdots \phi_1 }_\text{length $k-1$})\]
is equal to either $1$ or $0$.

If 
\[ \dim ( \underbrace{\tau_1 \cdots }_\text{length $k-1$} \to \underbrace{ \cdots \phi_1 }_\text{length $k-1$}) = 1\]
then 
\[ \underbrace{\tau_1 \cdots }_\text{length $k-1$}\cong \underbrace{ \cdots \phi_1 }_\text{length $k-1$}.\]
As both these words contain no additional $\tau_1$'s or $\phi_1$'s we can commute the $\tau_1$ to the far right of the left word to get
\[ \underbrace{\cdots \tau_1 }_\text{length $k-1$}\cong \underbrace{ \cdots \phi_1 }_\text{length $k-1$},\]
with the leftmost $k-2$ letters of both words being the same. Applying Lemma~\ref{lem:towrite} $k-2$ times gives $\tau_1 \cong \phi_1$, a contradiction. Thus 
\[ \dim ( \underbrace{\tau_1 \cdots }_\text{length $k-1$} \to \underbrace{ \cdots \phi_1 }_\text{length $k-1$}) = 0\] 
and so $\underbrace{\tau_1 \cdots  \phi_1}_\text{length $k$}$ is simple.

\vspace{1em}

\item \textbf{Case: The source-simple word ends in $\phi_2$}

This case is near identical to the case where the word ends in $\tau_2$.

\vspace{1em}

\end{trivlist}

This case by case analysis shows all source-simple words of length $k$ are simple, completing the induction.

\vspace{2em}

We now prove that all non-identical source-simple words are distinct. Again we induct on the length of the word.

As the objects $\tau_1, \phi_1,  \tau_2,\phi_2  \cdots, \tau_N, \phi_N$ are all distinct, we have that the result holds for source-simple words of length $1$.

Suppose the result holds for source-simple words of length $k-1$ for $k \leq N$. That is, suppose all non-identical source-simple words of length $k-1$ are distinct. Consider two equal source-simple words of length $k-1$. We have two cases to consider, either both words share a letter, or they don't share a letter.

Suppose the two words don't share a letter. Then the concatenation of these two words is another source-simple word, and is thus simple. However as the two component words are equal, we can use Frobenius reciprocity to show that the concatenated word has a non-trivial morphism to the tensor unit. Thus we have a contradiction, so this case can not occur.

Suppose the two words share a letter $\tau_1$. Considering all possible positions of the $\tau_1$'s and $\phi_1$'s in both words, in all but one bad case we can simultaneously commute (by the standard commuting relations, and with the relation $\phi_1 \tau_1 \phi_1= \tau_1 \phi_1 \tau_1$) either a $\tau_1$ or a $\phi_1$ to the same side of both words and then cancel with Lemma~\ref{lem:towrite} and appeal to the inductive hypothesis. The bad case is if one word contains just a $\phi_1 \tau_1$ and the other contains just a $\tau_1 \phi_1$. For this bad case we can commute these blocks to the far left of both words to get an equality of the form
\[   \phi_1 \tau_1 \underbrace{\cdots}_\text{length $k-2$} \cong  \tau_1 \phi_1\underbrace{\cdots}_\text{length $k-2$} .\]
We thus have
\begin{align*} 1 =  \dim ( \phi_1 \tau_1 \underbrace{\cdots}_\text{length $k-2$} \to  \tau_1 \phi_1\underbrace{\cdots}_\text{length $k-2$}) =& \dim ( \phi_1 \tau_1 \phi_1 \tau_1 \underbrace{\cdots}_\text{length $k-2$} \to  \underbrace{\cdots}_\text{length $k-2$})\\
																					    =& \dim ( \phi_1 \phi_1 \tau_1 \phi_1 \underbrace{\cdots}_\text{length $k-2$} \to  \underbrace{\cdots}_\text{length $k-2$})\\
																					    =& \dim (  \phi_1 \tau_1 \phi_1 \underbrace{\cdots}_\text{length $k-2$} \to \underbrace{\cdots}_\text{length $k-2$})  \\
																					    &  + \dim (  \tau_1 \phi_1 \underbrace{\cdots}_\text{length $k-2$} \to \underbrace{\cdots}_\text{length $k-2$}) \\
																					 =& \dim (   \tau_1 \phi_1 \underbrace{\cdots}_\text{length $k-2$} \to\phi_1 \underbrace{\cdots}_\text{length $k-2$})  \\
																					    &  + \dim (   \phi_1 \underbrace{\cdots}_\text{length $k-2$} \to \tau_1\underbrace{\cdots}_\text{length $k-2$}) \\
																					    =& 0.												    
  \end{align*}
Here the last step follows from the fact that the words $ \tau_1 \phi_1 \underbrace{\cdots}_\text{length $k-2$}$, $ \phi_1 \underbrace{\cdots}_\text{length $k-2$}$, and $\tau_1\underbrace{\cdots}_\text{length $k-2$}$ are all source-simple. Thus 
\[ \dim (   \tau_1 \phi_1 \underbrace{\cdots}_\text{length $k-2$} \to\phi_1 \underbrace{\cdots}_\text{length $k-2$})=0\]
because it is a morphism space between simples of different dimension, and
\[ \dim (   \phi_1 \underbrace{\cdots}_\text{length $k-2$} \to \tau_1\underbrace{\cdots}_\text{length $k-2$}) = 0 \]
by the inductive hypothesis. Thus we have a contradiction, thus this single bad case can not occur. Hence all non-identical source-simple words of length $k$ are distinct, completing the induction.
\end{proof}

Summing up the results of this section, we have Theorem~\ref{thm:fin}.

\section{The Classification Result}\label{sec:class}
Seeing that we have classified finite rank preserving cyclic \changedd{unitarizable} quotients of $\operatorname{Fib}^{*N}$ in the previous section, we can now apply Theorem~\ref{thm:semi} to give a classification of \change{unitary} fusion categories $\otimes$-generated by an object of dimension $\frac{1 + \sqrt{5}}{2}$.

\begin{proof}[Proof of Theorem~\ref{thm:main}]
Let $\cC$ be a \change{unitary} fusion category $\otimes$-generated by an object of dimension $\frac{1 +\sqrt{5}}{2}$. Then \changedd{ in particular $\cC$ is unitarizable, so} by Theorem~\ref{thm:semi} we have that $\cC$ is monoidally equivalent to a semi-direct product of a finite rank preserving cyclic \changedd{unitarizable} quotient of $\operatorname{Fib}^{*N}$ by a cyclic group that factors through the cyclic $\Z{N}$ action. From Theorem~\ref{thm:fin} we know that the finite cyclic \changedd{unitarizable} quotients of $\operatorname{Fib}^{*N}$ are $\operatorname{Fib}^{\boxtimes N}$ for any $N$, and $ \mathcal{TT}_3^{\boxtimes \frac{N}{2}}$ for $N$ even, with the cyclic actions given by Remark~\ref{rmk:syms}. For a cyclic group to factor through the cyclic $\Z{N}$ action, the order must be a multiple of $N$. Hence $\cC$ is monoidally equivalent to either
\begin{itemize}
\item A crossed product of $\operatorname{Fib}^{\boxtimes N}$ by $\Z{NM}$, with the generator of $\Z{NM}$ acting by cyclically permuting the factors, or
\item A crossed product of $ \mathcal{TT}_3^{\boxtimes \frac{N}{2}}$ by $\Z{NM}$, with the group $\Z{NM}$ acting by factoring through the $\Z{N}$ action described in Remark~\ref{rmk:syms}.
\end{itemize}
Thus $\cC$ is monoidally equivalent to either 
\begin{itemize}
\item $ \operatorname{Fib}^{\boxtimes N} \overset{\omega}{\rtimes} \Z{NM} \text{ where } N,M\in \mathbb{N}\text{ and } \omega \in H^3(\Z{NM} , \mathbb{C}^\times) , \text{ or }$
\item $ \mathcal{TT}_{3}^{\boxtimes N} \overset{\omega}{\rtimes} \Z{2NM} \text{ where } N,M\in \mathbb{N}\text{ and } \omega \in H^3(\Z{2NM} , \mathbb{C}^\times).$
\end{itemize}
\changedd{Each of these categories has a unitary structure, and further, this unitary structure is unique by \cite{1906.09710}. Finally \cite[Theorem 1]{1906.09710} shows that the monoidal equivalence above is naturally isomorphic to a unitary monoidal equivalence.}
\end{proof}

\footnotesize

\newcommand{\noopsort}[1]{}\def\cprime{$'$} \def\cprime{$'$} \def\cprime{$'$}

  Cain Edie-Michell, \textsc{Department of Mathematics, Vanderbilt University,
   Nashville, USA}\par\nopagebreak
  \textit{E-mail address}, \texttt{cain.edie-michell@vanderbilt.edu}


\begin{thebibliography}{10}

\bibitem{1509.00038}
Narjess Afzaly, Scott Morrison, and David Penneys.
\newblock The classification of subfactors with index at most $5\frac{1}{4}$.,
  2015.
\newblock \arxiv{1509.00038}.

\bibitem{0909.4099}
Stephen Bigelow, Scott Morrison, Emily Peters, and Noah Snyder.
\newblock Constructing the extended {H}aagerup planar algebra.
\newblock {\em Acta Math.}, 209(1):29--82, 2012.
\newblock \arxiv{0909.4099} \mathscinet{MR2979509}
  \doi{10.1007/s11511-012-0081-7}.

\bibitem{MR1193933}
Jocelyne Bion-Nadal.
\newblock An example of a subfactor of the hyperfinite {${\rm II}\sb 1$} factor
  whose principal graph invariant is the {C}oxeter graph {$E\sb 6$}.
\newblock In {\em Current topics in operator algebras ({N}ara, 1990)}, pages
  104--113. World Sci. Publ., River Edge, NJ, 1991.
\newblock \mathscinet{MR1193933}.

\bibitem{MR1424954}
Dietmar Bisch.
\newblock Bimodules, higher relative commutants and the fusion algebra
  associated to a subfactor.
\newblock In {\em Operator algebras and their applications (Waterloo, ON,
  1994/1995)}, volume~13 of {\em Fields Inst. Commun.}, pages 13--63. Amer.
  Math. Soc., Providence, RI, 1997.
\newblock \mathscinet{MR1424954} \googlebooks{QDhC_qw_D0cC}.

\bibitem{MR1437496}
Dietmar Bisch and Vaughan F.~R. Jones.
\newblock Algebras associated to intermediate subfactors.
\newblock {\em Invent. Math.}, 128(1):89--157, 1997.
\newblock \mathscinet{MR1437496} \doi{10.1007/s002220050137}.

\bibitem{MR2889539}
Thomas Booker and Alexei Davydov.
\newblock Commutative algebras in {F}ibonacci categories.
\newblock {\em J. Algebra}, 355:176--204, 2012.
\newblock \arxiv{1103.3537} \mathscinet{MR2889539}
  \doi{10.1016/j.jalgebra.2011.12.029}.

\bibitem{1111.1362}
Michael Burns.
\newblock {\em Subfactors, Planar Algebras and Rotations}.
\newblock PhD thesis, UC Berkeley, 2003.
\newblock \arxiv{1111.1362}.

\bibitem{coxeter}
H.J.M Coxeter.
\newblock The complete enumeration of finite groups of the form
  $r_i^2=(r_ir_j)^{k_{ij}}=1$.
\newblock {\em J. London Math. Soc.}, 1(10):21--25, 1935.

\bibitem{1312.7188}
Christopher~L. Douglas, Christopher Schommer-Pries, and Noah Snyder.
\newblock Dualizable tensor categories, 2013.
\newblock \arxiv{1312.7188}.

\bibitem{1711.00645}
Cain Edie-Michell.
\newblock Equivalences of graded categories, 2017.
\newblock \arxiv{1711.00645}.

\bibitem{1810.05717}
Cain Edie-Michell.
\newblock Classifying fusion categories $\otimes$-generated by an object of
  small {F}robenius-{P}erron dimension, 2018.
\newblock \arxiv{1810.05717}.

\bibitem{MR3242743}
Pavel Etingof, Shlomo Gelaki, Dmitri Nikshych, and Victor Ostrik.
\newblock {\em Tensor categories}, volume 205 of {\em Mathematical Surveys and
  Monographs}.
\newblock American Mathematical Society, Providence, RI, 2015.
\newblock \mathscinet{MR3242743}
  \url{http://www-math.mit.edu/~etingof/egnobookfinal.pdf}.

\bibitem{MR2763944}
C\'{e}sar Galindo.
\newblock Clifford theory for tensor categories.
\newblock {\em J. Lond. Math. Soc. (2)}, 83(1):57--78, 2011.
\newblock \arxiv{0902.1088} \mathscinet{MR2763944} \doi{10.1112/jlms/jdq064}.

\bibitem{MR2796073}
C\'{e}sar Galindo.
\newblock Crossed product tensor categories.
\newblock {\em J. Algebra}, 337:233--252, 2011.
\newblock \arxiv{0911.0881} \mathscinet{MR2796073}
  \doi{10.1016/j.jalgebra.2011.04.012}.

\bibitem{MR3354332}
Pinhas Grossman, David Jordan, and Noah Snyder.
\newblock Cyclic extensions of fusion categories via the {B}rauer-{P}icard
  groupoid.
\newblock {\em Quantum Topol.}, 6(2):313--331, 2015.
\newblock \arxiv{1211.6414} \doi{10.4171/QT/64} \mathscinet{MR3354332}.

\bibitem{MR2909758}
Pinhas Grossman and Noah Snyder.
\newblock {Quantum subgroups of the Haagerup fusion categories}.
\newblock {\em Comm. Math. Phys.}, 311(3):617--643, 2012.
\newblock \arxiv{1102.2631} \mathscinet{MR2909758}
  \doi{10.1007/s00220-012-1427-x}.

\bibitem{MR1749868}
Tomohiro Hayashi and Shigeru Yamagami.
\newblock Amenable tensor categories and their realizations as {AFD} bimodules.
\newblock {\em J. Funct. Anal.}, 172(1):19--75, 2000.

\bibitem{1607.06041}
{Andr\'e} Henriques, David Penneys, and James Tener.
\newblock Planar algebras in braided tensor categories, 2016.
\newblock \arxiv{1607.06041}.

\bibitem{MR3146015}
Yi-Zhi Huang and James Lepowsky.
\newblock Tensor categories and the mathematics of rational and logarithmic
  conformal field theory.
\newblock {\em J. Phys. A}, 46(49):494009, 21, 2013.
\newblock \arxiv{1304.7556} \mathscinet{MR3146015}
  \doi{10.1088/1751-8113/46/49/494009}.

\bibitem{MR1145672}
Masaki Izumi.
\newblock Application of fusion rules to classification of subfactors.
\newblock {\em Publ. Res. Inst. Math. Sci.}, 27(6):953--994, 1991.
\newblock \mathscinet{MR1145672} \doi{10.2977/prims/1195169007}.

\bibitem{MR1313457}
Masaki Izumi.
\newblock On flatness of the {C}oxeter graph {$E\sb 8$}.
\newblock {\em Pacific J. Math.}, 166(2):305--327, 1994.
\newblock \mathscinet{MR1313457} \euclid{euclid.pjm/1102621140}.

\bibitem{MR3536926}
Masaki Izumi, Scott Morrison, and David Penneys.
\newblock Quotients of {$A_2\ast T_2$}.
\newblock {\em Canad. J. Math.}, 68(5):999--1022, 2016.
\newblock \arxiv{1308.5723} \mathscinet{MR3536926}
  \doi{10.4153/CJM-2015-017-4}.

\bibitem{MR1929335}
Vaughan F.~R. Jones.
\newblock The annular structure of subfactors.
\newblock In {\em Essays on geometry and related topics, {V}ol. 1, 2},
  volume~38 of {\em Monogr. Enseign. Math.}, pages 401--463. Enseignement
  Math., Geneva, 2001.
\newblock \mathscinet{MR1929335}.

\bibitem{MR3166042}
Vaughan F.~R. Jones, Scott Morrison, and Noah Snyder.
\newblock The classification of subfactors of index at most 5.
\newblock {\em Bull. Amer. Math. Soc. (N.S.)}, 51(2):277--327, 2014.
\newblock \arxiv{1304.6141} \mathscinet{MR3166042}
  \doi{10.1090/S0273-0979-2013-01442-3}.

\bibitem{MR1308617}
Yasuyuki Kawahigashi.
\newblock On flatness of {O}cneanu's connections on the {D}ynkin diagrams and
  classification of subfactors.
\newblock {\em J. Funct. Anal.}, 127(1):63--107, 1995.
\newblock \mathscinet{MR1308617} \doi{10.1006/jfan.1995.1003}.

\bibitem{MR3345186}
Zhengwei Liu.
\newblock Composed inclusions of {$A\sb 3$} and {$A\sb 4$} subfactors.
\newblock {\em Adv. Math.}, 279:307--371, 2015.
\newblock \arxiv{1308.5691} \mathscinet{MR3345186}
  \doi{10.1016/j.aim.2015.03.017}.

\bibitem{MR3394622}
Scott Morrison and David Penneys.
\newblock 2-supertransitive subfactors at index {$3+\sqrt{5}$}.
\newblock {\em J. Funct. Anal.}, 269(9):2845--2870, 2015.
\newblock \arxiv{1406.3401} \mathscinet{MR3394622}
  \doi{10.1016/j.jfa.2015.06.023}.

\bibitem{MR2559686}
Scott Morrison, Emily Peters, and Noah Snyder.
\newblock Skein theory for the {$\mathcal{D}_{2n}$} planar algebras.
\newblock {\em J. Pure Appl. Algebra}, 214(2):117--139, 2010.
\newblock \arxiv{math/0808.0764} \mathscinet{MR2559686}
  \doi{10.1016/j.jpaa.2009.04.010}.

\bibitem{MR2104671}
M.~M{\"u}ger, J.~E. Roberts, and L.~Tuset.
\newblock Representations of algebraic quantum groups and reconstruction
  theorems for tensor categories.
\newblock {\em Algebr. Represent. Theory}, 7(5):517--573, 2004.
\newblock \mathscinet{MR2104671} \doi{10.1023/B:ALGE.0000048337.34810.6f}
  \arxiv{math.QA/0203206}.

\bibitem{MR1976459}
Victor Ostrik.
\newblock Module categories, weak {H}opf algebras and modular invariants.
\newblock {\em Transform. Groups}, 8(2):177--206, 2003.
\newblock \mathscinet{MR1976459} \arxiv{math/0111139}.

\bibitem{MR3427429}
Victor Ostrik.
\newblock Pivotal fusion categories of rank 3.
\newblock {\em Mosc. Math. J.}, 15(2):373--396, 405, 2015.
\newblock \arxiv{1309.4822} \doi{10.17323/1609-4514-2015-15-2-373-396}
  \mathscinet{MR3427429}.

\bibitem{MR1055708}
Sorin Popa.
\newblock Classification of subfactors: the reduction to commuting squares.
\newblock {\em Invent. Math.}, 101(1):19--43, 1990.
\newblock \mathscinet{MR1055708} \doi{10.1007/BF01231494}.

\bibitem{1906.09710}
David~J. Reutter.
\newblock On the uniqueness of unitary structure for unitarizable fusion
  categories, 2019.
\newblock \arxiv{1906.09710}.

\bibitem{MR1617550}
Feng Xu.
\newblock New braided endomorphisms from conformal inclusions.
\newblock {\em Comm. Math. Phys.}, 192(2):349--403, 1998.
\newblock \mathscinet{MR1617550} \doi{10.1007/s002200050302}.

\end{thebibliography}
\end{document}